\newcommand{\C}{\mathbb{C}}
\newtheorem{theorem}{Theorem}[section]
\newtheorem{corollary}[theorem]{Corollary}
\newtheorem{definition}[theorem]{Definition}
\newtheorem{example}[theorem]{Example}
\newtheorem{lemma}[theorem]{Lemma}
\newtheorem{proposition}[theorem]{Proposition}
\newtheorem{remark}[theorem]{Remark}
\newcommand{\xdownarrow}[1]{%
  {\left\downarrow\vbox to #1{}\right.\kern-\nulldelimiterspace}
}
\title{\textbf{On tangency in equisingular families of curves and surfaces}}
\author{ \ \ \ \\{A. Giles Flores\footnote{A. Giles Flores: Universidad Aut\'onoma de Aguascalientes, Departamento de Matemáticas y Física, Aguascalientes, México. \hspace{0.5cm} e-mail: arturo.giles@cimat.mx} $\ \ $, $\ \ $ O.N. Silva\footnote{O.N. Silva: Instituto de Matemáticas, Universidad Nacional Aut\'onoma de México (UNAM), Cuernavaca, México. e-mail: otoniel@im.unam.mx}} $\ \ $ and $\ \ $ J. Snoussi\footnote{J. Snoussi: Instituto de Matemáticas, Universidad Nacional Aut\'onoma de México (UNAM), Cuernavaca, México. \hspace{5cm} e-mail: jsnoussi@im.unam.mx}}
\date{}
\begin{document}

\maketitle

\begin{abstract}
We study the behavior of limits of tangents in topologically equivalent spaces. In the context of families of generically reduced curves, we introduce the $s$-invariant of a curve and we show that in a Whitney equisingular family with the property that the $s$-invariant is constant along the parameter space, the number of tangents of each curve of the family is constant. In the context of families of isolated surface singularities, we show through examples that Whitney equisingularity is not sufficient to ensure that the tangent cones of the family are homeomorphic. We explain how the existence of exceptional tangents is preserved by Whitney equisingularity but their number can change.
\end{abstract}

\section{Introduction}

$ \ \ \  $ We say that two germs of analytic sets $(W,0)$ and $(W^{'},0)$ in $(\mathbb{C}^n,0)$ have the same embedded topological type or are topologically equivalent if there exists a germ of homeomorphism $\varphi : (\mathbb{C}^n,W,0) \rightarrow (\mathbb{C}^n,W^{'},0)$.

In 1971, Zariski asked the following question (see \cite[Problem B]{zariski}): if $(W,0)$ and $(W^{'},0)$ are topologically equivalent hypersurfaces, are the projectivized tangent cones of $W$ and $W^{'}$ homeomorphic? In 2005, Fern\'andez de Bobadilla gave a negative answer to Zariski's question by presenting a counter-example (see \cite[Th.A (iii)]{bobadilla}). So, motivated by this question, we are interested in understanding the behavior of limits of tangents in topologically equivalent spaces. In this work we deal with that question ``in family'', first for generically reduced curves and then for families of isolated surface singularities.

In the case of germs of curves in $(\mathbb{C}^n,0)$, we consider a topologically trivial (flat) family of generically reduced curves $p : (X,0) \subset (\mathbb{C}^{n+1},0) \rightarrow (\mathbb{C},0)$  with a section $\sigma: (\mathbb{C},0)\rightarrow (X,0)$ such that the image of $\sigma$ is smooth and the fibers $X_t:=p^{-1}(t)$ have their unique possibly singular point at $\sigma(t)$. In this case, the special curve $(X_0,\sigma(0))$ and the generic curve $(X_t,\sigma(t))$ are topologically equivalent, for all $t$ (see \cite[Th. $9.3$]{greuel}). Thus we can ask under what conditions the topological type of the tangent cone of $(X_t,\sigma(t))$ remains constant. More precisely, we are interested in the following question:

\begin{flushleft}
\textbf{Question 1:} \textit{ If $p:(X,0)\rightarrow (\mathbb{C},0)$ is a topologically trivial flat family of generically reduced curves, under what conditions are the Zariski tangent cones $C(X_0,\sigma(0))$ and $C(X_t,\sigma(t))$ homeomorphic?}
\end{flushleft}

Since set-theoretically the tangent cone of a curve is a finite union of lines, then the cones $C(X_0,\sigma(0))$ and $C(X_t,\sigma(t))$ are homeomorphic if and only if the number of tangents of $(X_t,\sigma(t))$ is constant for all $t$. Therefore, the above question can be also reformulated in terms of numbers of tangents of $(X_t,\sigma(t))$.

We can see that Whitney equisingularity is not a sufficient condition for Question 1; in fact, we present an example of a Whitney equisingular family of curves where the number of tangents is not constant (see Example \ref{exemploprinc2}). 

Our approach to answer Question 1 is to study the problem of the number of tangents under generic linear projections of the fibers $(X_t,\sigma(t))$ to $(\mathbb{C}^2,0)$. If $(\mathscr{C} ,0) \subset (\mathbb{C}^n,0) $ is a germ of reduced curve and $\pi:(\mathscr{C},0)\rightarrow (\mathbb{C}^2,0)$ is a generic projection in the sense of (\cite[Ch. IV]{briancon}), then the image $(\pi(\mathscr{C}),0)$ of $(\mathscr{C},0)$ by $\pi$ is a reduced curve with the structure given by the $0$-Fitting ideal of $\pi_{\ast}(\mathcal{O}_{\mathscr{C},0})$ (see \cite[Sec. $1$]{Teissier8}). Furthermore, the number of tangents of $(\mathscr{C},0)$ and $(\pi(\mathscr{C}),0)$ is the same. Thus, for a germ of curve $(\mathscr{C} ,0) \subset (\mathbb{C}^n,0) $ with two or more branches we define in Section \ref{sections} the $s$-invariant as the sum of the intersection multiplicities of the branches of its generic projection $(\pi(\mathscr{C}),0)$. 

In order to make use of this invariant, we need to establish its semi-continuity in certain types of deformations, so  we propose to find a common generic projection for $(X_t,\sigma(t))$ for all $t$. Thus, another question is: 

\begin{flushleft}
\textbf{Question 2:} \textit{ If $p:(X,0) \rightarrow (\mathbb{C},0)$ is a flat family of generically reduced curves, under what condition does there exist a common generic projection for $(X_t,\sigma(t))$ for all $t$?}
\end{flushleft}

Briançon, Galligo and Granger proved (\cite[Th. IV.$8$]{briancon}) that if $p:(X,0) \rightarrow (\mathbb{C},0)$ is an equisaturated family of reduced curves then there exists a common generic projection for $X_t$ for all $t \in T$. We give an answer to Question 2 by  extending this result without any  equisingularity hypothesis on $(X,0)$ (see Theorem \ref{thgenericproj}).

Using the above result, we finally give an answer to Question 1. More precisely, we show that in a topologically trivial family, if the multiplicity and the $s$-invariant of the fibers are constant, then their tangent cones are homeomorphic (Corollary \ref{corprinc}). As a consequence of our result, we show that if $p:(X,0)\rightarrow (\mathbb{C},0)$ is an equisaturated family of reduced curves, then the cones $C(X_0,\sigma(0))$ and $C(X_t,\sigma(t))$ are homeomorphic (Corollary \ref{cor1}); this result is a particular case of a result by Sampaio (see \cite[Th. $2.2$]{edson}). We also present in Section \ref{sec5} counterexamples to some natural questions about the results.

In the last part of this work, we deal with the case of families of surfaces with isolated singularities. We present an example showing that, in a Whitney equisingular family of isolated surfaces singularities, the tangent cones of the elements of the family need not be homeomorphic (see Example \ref{exewhi}).

However, in surfaces we have a new phenomenon related to limits of tangents: the (possible) existence of exceptional tangents (see \cite{LT88}). In Section \ref{sec4}, we first explain how the existence of exceptional tangents is preserved in Whitney regular families; this fact is a direct consequence of results by Lê and Teissier (\cite[Prop. 2.2.4.2 and Thm. 2.3.2]{LT88}), and seems to be known to specialists. Then we consider the following question:

\begin{flushleft}
\textbf{Question 3:} \textit{ In a Whitney equisingular family of isolated surface singularities, is the number of exceptional tangents constant?}
\end{flushleft}

The answer is negative and the example we present (Example \ref{exefinal}) is one of the classical examples of Briançon and Speder studied in their paper \cite{BS75a} (see also \cite[Intro.]{pichon2}). We thank Anne Pichon for pointing it out to us.
 
Throughout the paper, $(x_1,\cdots,x_n,t)$ denotes a local coordinate system for $\mathbb{C}^n \times \mathbb{C}$. The ring $\mathcal{O}_n\simeq \mathbb{C}\lbrace x_1,\cdots,x_n \rbrace$ (respectively $\mathcal{O}_{n+1}\simeq \mathbb{C}\lbrace x_1,\cdots,x_n,t \rbrace$) denotes the local ring of holomorphic functions on $(\mathbb{C}^n,0)$ (respectively on $(\mathbb{C}^n \times \mathbb{C},0)$); that is, we use the last coordinate in $\mathbb{C}^n \times \mathbb{C}$ for the variable $t$ (with the exception of the proof of Theorem \ref{thgenericproj}).
 
\section{Preliminary Results}\label{sec2}

\subsection{Families of curves}

\begin{definition}\label{defsur} (a) Let $(\mathscr{C},0)$ be a germ of curve in $(\mathbb{C}^{n},0)$. We say that $(\mathscr{C},0)$ is generically reduced if the only possibly non reduced point of the curve near the origin is the origin itself, that is, a curve with an isolated singularity at the origin.\\ 

\noindent (b) Let $(X,0) \subset (\mathbb{C}^{n+1},0) $ be a germ of a reduced and pure dimensional complex surface. Consider a projection $p:(X,0)\rightarrow (\mathbb{C},0)$. We say that the surface $(X,0)$ is a one-parameter flat deformation of the germ of reduced curve $(X_0,0):=(p^{-1}(0),0)$ if the projection $p$ is a flat map, which is equivalent, in this setting, to say that $p\in {\mathcal O}_{X,0}$ is neither a zero divisor nor a unit. When $(X_0,0)$ is generically reduced, we say $p:(X,0)\rightarrow (\mathbb{C},0)$, or simply $(X,0)$, is a family of generically reduced curves.\\

\noindent (c) Given a representative $p:X\rightarrow D$ of a family of generically reduced curves $(X,0)$, we will denote the fibers of $p$ by $X_t:=p^{-1}(t)$.

\end{definition}

As illustrated in Example \ref{exemplo2.1}, when $(X,0)$ is not Cohen-Macaulay the special fiber $(X_0,0)$ will always have an embedded component by \cite[Corollary 6.5.5]{jong} (see also \cite[Theorem 17.3]{matsumura}).

\begin{example}\label{exemplo2.1} Let $(X,0)$ be the germ of surface parametrized by the map 

$$\begin{array}{rcl}
n:(\mathbb{C}^2,0) & \rightarrow & (X,0) \subset (\mathbb{C}^4,0),\\
(u,t) & \mapsto & (tu, u^2, u^3, t)\\
\end{array}
$$

\noindent it is a reduced surface with an isolated singularity at the origin, defined by the ideal 

$$ \mathcal{I} =\langle x^2-t^2y,xy-tz,xz-ty^2,z^2-y^3\rangle \mathbb{C} \lbrace x,y,z,t \rbrace.$$

The restriction to $(X,0)$ of the canonical projection $p:(\mathbb{C}^4,0)\rightarrow (\mathbb{C},0)$ to the last factor makes the surface into a one parameter deformation of the curve $(X_0,0)$ defined by the ideal: 

$$ \mathcal{I}_{0}=\langle x^2,xy,xz,z^2-y^3\rangle \mathbb{C} \lbrace x,y,z \rbrace.$$

One can see that the curve has an embedded component at the origin and that the surface $(X,0)$ is not Cohen-Macaulay (see also Example \rm\ref{example2.2}\textit{).}
\end{example}

\subsection{The invariants}

$ \ \ \  $ Following Brücker and Greuel \cite[p. 96]{greuel2}, we recall the definitions of the $\delta$-invariant and the Milnor number for a generically reduced curve.

\begin{definition}
Let $(\mathscr{C},0) \subset (\mathbb{C}^{n},0)$ be a germ of a generically reduced curve defined by the ideal $ \mathcal{I}_{(\mathscr{C},0)}\subset {\mathcal O}_n$. Denote by $(|\mathscr{C}|,0)$ the corresponding curve with reduced structure, and let $n:(\overline{\mathscr{C}},\overline{0}))\rightarrow (|\mathscr{C}|,0)$ be the normalization of $(|\mathscr{C}|,0)$. The following numbers:

\begin{center}
$\delta(|\mathscr{C}|,0):=dim_{\mathbb{C}} \left(\dfrac{\mathcal{O}_{(\overline{\mathscr{C}},\overline{0})}}{\mathcal{O}_{(|\mathscr{C}|,0)}} \right)$, $ \ \ \ $ $ \epsilon(\mathscr{C},0):=dim_{\mathbb{C}} \dfrac{\sqrt{\mathcal{I}_{(\mathscr{C},0)}\mathcal{O}_{(\mathscr{C},0)}}}{\mathcal{I}_{(\mathscr{C},0)}\mathcal{O}_{(\mathscr{C},0)}}$ 
$\ \ \ \ $ and $ \ \ \ \ $ $\delta(\mathscr{C},0):=\delta(|\mathscr{C}|,0)-\epsilon(\mathscr{C},0)$\\
\end{center}

\noindent are respectively the delta-invariant of $(|\mathscr{C}|,0)$, the epsilon-invariant and the delta-invariant of $(\mathscr{C},0)$. Furthermore,

\begin{center}
$m(\mathscr{C},0)$, $ \ \ \ \ $ $\mu(|\mathscr{C}|,0)=2\delta(|\mathscr{C}|,0)-r(\mathscr{C},0)+1 \ \ \ \ $ and $\ \ \ \ \mu(\mathscr{C},0)=\mu(|\mathscr{C}|,0)-2\cdot \epsilon(\mathscr{C},0)$
\end{center}

\noindent are respectively the Hilbert-Samuel multiplicity of the maximal ideal $M(\mathcal{O}_{(\mathscr{C},0)})$ of the local ring $\mathcal{O}_{(\mathscr{C},0)}$ of $\mathscr{C}$ at $0$ and the Milnor number of $(|\mathscr{C}|,0)$ and $(\mathscr{C},0)$; where $r(\mathscr{C},0)$ is the number of branches of $(\mathscr{C},0)$.

\end{definition}

\begin{example}\label{example2.2} 
Looking again at Example \rm \ref{exemplo2.1}, \textit{a primary decomposition of the ideal $\mathcal{I}_0$ in $\mathcal{O}_3 \simeq \mathbb{C}\lbrace x,y,z \rbrace$ is given by}

$$\mathcal{I}_{0}=\langle x^2,xy,xz,z^2-y^3\rangle\mathcal{O}_3=\langle \langle x,z^2-y^3\rangle\cap \langle x^2,xy,y^3,z \rangle\rangle\mathcal{O}_3.$$

\noindent \textit{The local ring of the corresponding reduced curve is $\dfrac{{\mathcal{O}_3}}{\langle x, z^2 - y^3\rangle \mathcal{O}_3}$, so $\delta(|X_{0}|,0)=1$ and $\mu(|X_0|,0)=2$. Also, by} \rm\cite[\textit{Corollary 5.7}]{greuel} \textit{we have that}  

$$\epsilon(X_{0},0)=dim_{\mathbb{C}}\dfrac{\mathbb{C}\lbrace x,y,z \rbrace}{\langle x^2,xy,y^3,z \rangle}-dim_{\mathbb{C}}\dfrac{\mathbb{C}\lbrace x,y,z \rbrace}{\langle x^2,xy,y^3,z,x,z^2-y^3 \rangle}=4-3=1$$.

\textit{Therefore, $\delta(X_{0},0)=0$, $\mu(X_{0},0)=0$ and another calculation shows that $m(X_0,0)=2$.}

\end{example}

\begin{remark}
\textit{The multiplicity of a germ of generically reduced curve $(\mathscr{C},0)$ is the same as the one of the corresponding reduced curve $(|\mathscr{C}|,0)$} (see \rm \cite[Lemma $4.8$]{otoniel2}\textit{).}
\end{remark}

\subsection{Topological triviality, Whitney equisingularity and Equisaturation}\label{sectiontop}

$ \ \ \  $ We will recall here the concepts of topological triviality and Whitney equisingularity, along a section, and state the main criterion we will be using. Given a family of generically reduced curves $p:(X,0)\rightarrow (\mathbb{C},0)$, we will consider a good representative $p:X\rightarrow T$ in the sense of \cite[p. 248]{buch} (see also \cite[Appendix]{greuel} or \cite[Section 2]{otoniel}).

\begin{definition}\label{deftoptrivial} Let $p:(X,0) \subset (\mathbb{C}^{n+1},0) \rightarrow (\mathbb{C},0)$ be a family of generically reduced curves and suppose that there is a good representative $p:X\rightarrow T$ with a section $\sigma:T \rightarrow X$, such that both $\sigma(T)$ and $X_t \setminus \sigma(t)$ are smooth for $t \in T$.\\ 

\noindent \textit{(a) We say that $p: X\rightarrow T$ is topologically trivial (or for simplicity, $X$ is topologically trivial) if there is a homeomorphism $h: X \rightarrow X_{0} \times T$ such that $p=p^{'} \circ h$, where $p^{'} :X_{0} \times T \rightarrow T$ is the projection on the second factor}.\\

\noindent \textit{(b) Let $p:X \rightarrow T$ be a topologically trivial family of generically reduced curves. If in addition, $h$ is bi-Lipschitz (that is, $h$ and its inverse $h^{-1}$ are Lipschitz maps with the ambient metric), then we say that $X$ is bi-Lipschitz equisingular.}\\

\noindent \textit{(c) We say that $p:X\rightarrow T$ is Whitney equisingular (or for simplicity, $X$ is Whitney equisingular) if the stratification} $ \lbrace X \setminus \sigma(T), \sigma(T) \rbrace $ \textit{satisfies  Whitney's conditions (a) and (b) at $0$, that is:}\\

\textit{For any sequences of points $(x_n)\subset X \setminus \sigma(T)$ and $(t_n) \subset \sigma(T) \setminus \lbrace 0 \rbrace$ both converging to $0$ and such that the sequence of lines $(x_nt_n)$ converges to a line $l$ and the sequence of directions of tangent spaces, $T_{x_n}X$, to $X$ at $x_n$, converges to a linear space $H$ we have:}

\begin{flushleft}
\textit{(Whitney's condition a): The tangent space to $\sigma(T)$ at $0$ is contained in $H$.}\\
\textit{(Whitney's condition b): The line $l$ is contained in $H$.}
\end{flushleft}

\noindent \textit{(d) If $(X,0)$ is a family of reduced curves, we say that $p:X\rightarrow T$ is equisaturated (or for simplicity $X$ is equisaturated) if the Lipschitz saturation of the local rings of $(X_0,0)$ and $(X_t,\sigma(t))$ are isomorphic} \rm(\textit{see for example} \rm\cite[\textit{p.36}]{briancon}).

\end{definition}

\begin{theorem}\label{teotoptrivial} Let $p:(X,0)\rightarrow (\mathbb{C},0)$ be a family of generically reduced curves, with $(X,0)$ reduced and equidimensional. Suppose there is a good representative $p:X\rightarrow T$ with a section $\sigma:T \rightarrow X$, such that both $\sigma(T)$ and $X_t \setminus \sigma(t)$ are smooth for $t \in T$. Then: \\

\noindent \textit{(a) $X$ is topologically trivial, if and only if, both $\delta(X_{t}, \sigma(t))$ and $r(X_{t},\sigma(t))$ are constant for all $t\in T$, if and only if, $\mu(X_{t},\sigma(t))$ is constant and $X_t$ is connected for all $t\in T$.}\\

\noindent \textit{(b) $X$ is Whitney equisingular if and only if both $\mu(X_{t},\sigma(t))$ and $m(X_t, \sigma(t))$ are constant for all $t\in T$.}\\

\noindent \textit{(c) When $X$ is a family of reduced curves, $X$ is equisaturated if and only if it is topologically trivial and the topological type of a generic projection of $X_t$ to ${\mathbb C}^2$ (see Def.} \rm\ref{defgenericproj}\textit{) does not depend on $t\in T$.}

\end{theorem}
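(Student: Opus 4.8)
The plan is to treat the three equivalences separately, reducing each to an established equisingularity criterion and supplying the bookkeeping needed to pass from reduced to generically reduced curves; the numerical backbone throughout is the identity $\mu(\mathscr{C},0)=2\delta(\mathscr{C},0)-r(\mathscr{C},0)+1$ recorded among the definitions, together with the upper semicontinuity of $\delta$ and $\mu$ in flat families. For part (a) I would first dispose of the purely numerical equivalence. The identity shows at once that constancy of $\delta$ and $r$ forces $\mu$ to be constant. For the converse, semicontinuity gives $\delta(X_0,\sigma(0))\ge \delta(X_t,\sigma(t))$ and $\mu(X_0,\sigma(0))\ge \mu(X_t,\sigma(t))$ for generic $t$; if $\mu$ is constant the identity yields $r(X_0,\sigma(0))-r(X_t,\sigma(t))=2\bigl(\delta(X_0,\sigma(0))-\delta(X_t,\sigma(t))\bigr)\ge 0$, and the connectedness hypothesis is exactly what prevents branches from separating, pinning down $r$, and hence $\delta$, as constant. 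To connect the numerical data with topological triviality I would invoke the curve analogue of the L\^e--Ramanujam theorem: for families of reduced curves $\mu$-constancy is equivalent to topological triviality (Buchweitz--Greuel), and the passage to generically reduced curves is handled by the results cited in \cite{greuel}, once one checks that the embedded-component contribution is correctly absorbed by the correction term $\epsilon$ in $\delta(\mathscr{C},0)=\delta(|\mathscr{C}|,0)-\epsilon(\mathscr{C},0)$.

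For part (b), the plan is to identify the Whitney conditions for the stratification $\lbrace X\setminus\sigma(T),\sigma(T)\rbrace$ with the constancy along $\sigma(T)$ of Teissier's $\mu^{*}$-sequence, equivalently the local polar invariants of the surface $X$, following Teissier and L\^e--Teissier. For a one-dimensional fibre these invariants reduce to the pair consisting of the multiplicity $m(X_t,\sigma(t))$ and the Milnor number $\mu(X_t,\sigma(t))$: equimultiplicity controls Whitney's condition (a), while topological triviality, equivalently $\mu$-constancy by part (a), controls condition (b). I would establish the two implications by combining Teissier's principle of specialization of integral dependence, which yields the Whitney conditions from constancy of the pair $(\mu,m)$, with the fact that Whitney regularity forces both equimultiplicity and topological triviality.

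For part (c), when the fibres are reduced, I would use the saturation theory of Pham--Teissier together with the theorem of Brian\c con--Galligo--Granger \cite[Th.~IV.$8$]{briancon}. The key point is that the Lipschitz saturation of the local ring of a reduced curve is determined by, and determines, the embedded topological type of its generic plane projection; hence equisaturation of the family is equivalent to the constancy of that topological type, the notion of generic projection being made uniform across the family by Theorem \ref{thgenericproj}. Topological triviality then follows, since constancy of the topological type of the plane projections forces $\mu$-constancy of the projections, and therefore of the $X_t$ themselves.

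I expect the main obstacle to be the generically reduced bookkeeping in part (a) --- tracking the $\epsilon$-invariant and the embedded component so that both the identity $\mu=2\delta-r+1$ and the semicontinuity statements remain valid --- together with the verification in part (b) that the $\mu^{*}$--Whitney correspondence, whose classical proofs are written for isolated hypersurface singularities, survives in the present curve-with-section setting, where the total space $X$ need not be a hypersurface.
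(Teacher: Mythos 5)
Note first that the paper does not actually prove Theorem \ref{teotoptrivial}: it is a summary of known results, proved by citation --- (a) is \cite[Theorem 9.3]{greuel}, (b) is \cite[Theorem 3.54]{otoniel2} (see also \cite[Theorem 4.7]{otoniel}), and (c) is \cite[Th\'eor\`eme IV.8]{briancon}. Your reductions for (a) and (c) land essentially on the same sources (Buchweitz--Greuel/Greuel for the $\mu$--$\delta$--$r$ equivalence, Pham--Teissier saturation theory and BGG IV.8 for equisaturation), so those parts are sound in outline. The genuine gap is in your plan for (b). The $\mu^{*}$-criterion of Teissier and Brian\c{c}on--Speder, and the classical use of the principle of specialization of integral dependence that you invoke, are hypersurface statements; here $X$ has codimension $n-1$ in $\mathbb{C}^{n+1}$ and the fibers are generically reduced space curves, whose $\mu$ is the Br\"ucker--Greuel modified invariant ($\mu(\mathscr{C},0)=\mu(|\mathscr{C}|,0)-2\epsilon(\mathscr{C},0)$), so no Milnor fibration or classical L\^e--Greuel formula is available. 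Teissier's polar-multiplicity characterization of Whitney conditions does hold in arbitrary codimension, but the step your plan treats as a ``reduction'' --- identifying constancy of the pair $(\mu(X_t,\sigma(t)),m(X_t,\sigma(t)))$ with constancy of the polar invariants of $X$ along $\sigma(T)$ in this generically reduced setting --- is precisely the content of the cited theorem of Silva and Silva--Snoussi, i.e.\ it is the theorem itself, not a citation-ready fact. Moreover your heuristic split, ``equimultiplicity controls condition (a), $\mu$-constancy (equivalently topological triviality) controls condition (b),'' is false as stated: Examples \ref{exemplo2.3} and \ref{exemploprinc} of the paper are topologically trivial ($\mu$-constant, connected) families that fail Whitney regularity because $m$ jumps, so $\mu$-constancy alone controls neither Whitney condition. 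You flag this as ``the main obstacle''; it is, and it is not resolved by the tools you name.

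A secondary slip occurs at the end of your part (c): the inference ``constancy of the topological type of the plane projections forces $\mu$-constancy of the projections, and therefore of the $X_t$ themselves'' is unjustified as written, since $\mu$ of a space curve is not determined by $\mu$ of its generic plane projection --- one has $\delta(\tilde{X}_t)=\delta(X_t)+\dim_{\mathbb{C}}\bigl(\mathcal{O}_{X_t}/\mathcal{O}_{\tilde{X}_t}\bigr)$, and the correction term can vary in a family even when the topological type of $\tilde{X}_t$ does not. In BGG the passage from equisaturation to topological triviality of $X$ itself goes through strong simultaneous resolution (equivalently, through the fact that saturation controls Lipschitz, hence topological, type), not through a direct transfer of $\mu$-constancy from the projected family; since \cite[Th\'eor\`eme IV.8]{briancon} already states the full equivalence, including topological triviality of $X$, the correct move is to quote it whole (with Theorem \ref{thgenericproj} supplying the uniform generic projection), rather than to rebuild that implication by the shortcut you propose.
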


Statement $(a)$ is proved in \cite[Theorem 9.3]{greuel}. Statement $(b)$ is proved in \cite[Theorem $3.54$]{otoniel2}, see also \cite[Theorem 4.7]{otoniel}; $(c)$ is proved \cite[Théorème IV.8]{briancon}. In our setting, we always ask that $X$ has pure dimension $2$. We remark that in \cite{greuel}, Greuel considers a more general setting where $X$ is  not necessarily equidimensional. This explains the appearance of the hypothesis on the equidimensionality of $(X,0)$ in Theorem \ref{teotoptrivial}. 

\begin{example}\label{exemplo2.3} Consider again the family of curves of Example \rm\ref{exemplo2.1}. \textit{Take a good representative $p:X\rightarrow T$ and consider the section $\sigma: T\rightarrow X$ defined by $\sigma(t)=(0,0,0,t)$. Note that $X_t \setminus \sigma(t)$ is smooth for all $t\in T$. We have that $\mu(X_t,\sigma(t))=0$ for all $t\in T$. By} \rm\cite[\textit{Proposition 8.8}]{greuel} \textit{we have that $X_t$ is connected for all $t\in T$. Then by Theorem} \rm\ref{teotoptrivial}\textit{(a) $X$ is topologically trivial. Note also that $m(X_0,0)=2$ and $m(X_t,\sigma(t))=1$ for $t\neq 0$, hence $(X,0)$ is not Whitney equisingular by Theorem} \rm\ref{teotoptrivial}\textit{(b)}.
\end{example}

\section{The existence of a common generic projection}\label{secproj}

$ \ \ \  $ The purpose of this section is to give an answer to Question 2, stated in the introduction. Let us precise what we mean by generic projection for a germ of curve.

\begin{definition}\label{defgenericproj} (a) Let $W$ be a representative of a germ of analytic space $(W,0) \subset (\mathbb{C}^n,0)$. We say that a vector $v \in C_5(W,0)$ if there are sequences of points $x_n,y_n \in W$ and numbers $\lambda_n \in \mathbb{C}$ such that $x_n\rightarrow 0$, $y_n\rightarrow 0$ and $\lambda_n\overline{(x_n-y_n)}\rightarrow v$ as $n\rightarrow \infty$ (see \rm\cite[\textit{Section $3$}]{Whi652}\textit{).}\\ 

\noindent \textit{(b) Let $(\mathscr{C},0)$ be a germ of generically reduced curve in $(\mathbb{C}^n,0)$ and $\pi:(\mathbb{C}^n,0)\rightarrow (\mathbb{C}^2,0)$ be a linear projection. We say that the restriction $\pi|_{(\mathscr{C},0)}: (\mathscr{C},0)\rightarrow (\mathbb{C}^2,0)$ is a $C_5$-generic projection for the germ of curve $(\mathscr{C},0)$ if the kernel of $\pi$ intersects $C_5(\mathscr{C},0)$ transversally, that is, $ker(\pi)\cap C_5(\mathscr{C},0)=(0, \cdots, 0)$ (see} \rm\cite[\textit{Chap. IV}]{briancon}\textit{).}\\ 

\noindent \textit{(c) When $\pi$ is $C_5$-generic for $(\mathscr{C},0)$, we will denote the image curve by $( \tilde{{\mathscr{C}}},0)$ and call it a $C_5$-generic projection of $(\mathscr{C},0)$.}\\

\noindent \textit{(d) Let $G(2,n)$ be the Grassmannian of all complex $(n-2)$-dimensional linear subspaces of $\mathbb{C}^n$. For each $H \in G(2,n)$, denote by $\pi_H$ the linear projection parallel to $H$ on a plane $H^{\perp}\simeq \mathbb{C}^2$ (orthogonal to $H$).}
\end{definition}

\begin{remark} Briançon, Galligo and Granger showed that if $(\mathscr{C},0)$ is a germ of analytic reduced singular curve in $(\mathbb{C}^n,0)$, then $C_5(\mathscr{C},0)$ is a finite union of planes, each of these planes containing at least one tangent of the germ of curve $(\mathscr{C},0)$ (see \rm\cite{briancon}, \textit{Theorem $IV.1$). In} \rm\cite{otoniel5}, \textit{we describe a method to determine the $C_5$-cone of a curve $(\mathscr{C},0)$ using a parametrization of $(\mathscr{C},0)$. Notice that if $(\mathscr{C},0)$ is a germ of generically reduced curve, then $C_5(\mathscr{C},0)$ and $C_5(|\mathscr{C}|,0)$ are set theorically the same.}
\end{remark}

In \cite[Prop. IV.8]{briancon}, it is shown that if $p:X \rightarrow T$ is an equisaturated family of reduced curves then there is a Zariski's open (and dense) set $\Omega \subset G(2,n)$ such that for each $H \in \Omega$, $\pi_H$ is a generic projection for $(X_t,\sigma(t))$ for all $t\in T$. Now, we extend this result without any equisingularity hypothesis on $X$.

\begin{lemma} \label{C5asblowup}Let $W$ be a representative of a germ $(W,0) \subset (\C^n,0)$ of analytic set, then there exists an analytic variety $C_5(W)$ with  an analytic map  \[\psi: C_5(W) \to W\]
 such that for every point $q \in W$ we 
have that $\psi^{-1}(q)= C_5(W,q)$.
\end{lemma}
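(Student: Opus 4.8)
The plan is to realize the relative $C_5$-cone as the exceptional fibre of a blow-up of the diagonal, and then to cone it off fibrewise over $W$; throughout I write $[w]\in\mathbb{P}^{n-1}$ for the projective class of a nonzero vector $w\in\C^n$. First I would work in $\C^n\times\C^n$ with coordinates $(x,y)=(x_1,\dots,x_n,y_1,\dots,y_n)$ and consider $W\times W$ together with its diagonal $\Delta\cong W$. On the complement of the diagonal the secant map
\[
s:(W\times W)\setminus\Delta\longrightarrow\mathbb{P}^{n-1},\qquad (x,y)\longmapsto[\,x_1-y_1:\cdots:x_n-y_n\,],
\]
is holomorphic, because $x-y$ vanishes exactly along $\Delta$. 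I would then let $\mathcal{B}$ be the closure, inside $(W\times W)\times\mathbb{P}^{n-1}$, of the graph of $s$. The key structural fact is that $\mathcal{B}$ is analytic: it is the blow-up of $W\times W$ along the coherent ideal sheaf $\mathcal{J}=(x_1-y_1,\dots,x_n-y_n)\,\mathcal{O}_{W\times W}$ cutting out $\Delta$, and blow-ups of complex analytic spaces along coherent ideals are analytic; equivalently, the projection $\mathcal{B}\to W\times W$ is proper, so $\mathcal{B}$ is analytic by Remmert's proper mapping theorem.

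Next I would set $E:=\mathcal{B}\cap(\Delta\times\mathbb{P}^{n-1})$, the part of $\mathcal{B}$ lying over the diagonal (the exceptional divisor of the blow-up), and regard it through $\Delta\cong W$ as an analytic subset $\mathbb{P}C_5(W)\subset W\times\mathbb{P}^{n-1}$. The heart of the argument is the fibrewise identification: for $q\in W$ a class $[v]$ lies in the fibre $\mathbb{P}C_5(W)_q$ if and only if $(q,q,[v])\in\mathcal{B}$, that is, if and only if there are points $x_m,y_m\in W$ with $x_m\neq y_m$, $x_m\to q$, $y_m\to q$ and $[x_m-y_m]\to[v]$. By Definition \ref{defgenericproj}(a) this says exactly that $v$ lies (up to scalar) in $C_5(W,q)$, so that $\mathbb{P}C_5(W)_q=\mathbb{P}\big(C_5(W,q)\big)$. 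Since the scalars $\lambda_n$ range over $\C$, the set $C_5(W,q)$ is a complex cone, hence equals the full affine cone over $\mathbb{P}(C_5(W,q))$.

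Finally I would cone off fibrewise to pass from directions back to vectors: define $C_5(W)\subset W\times\C^n$ as the union of the zero-section $W\times\{0\}$ with $\{(q,v):v\neq 0,\ (q,[v])\in\mathbb{P}C_5(W)\}$, which is the image of the tautological bundle $\mathcal{O}(-1)$ over $\mathbb{P}C_5(W)$ under the blow-down $W\times\C^n\times\mathbb{P}^{n-1}\to W\times\C^n$, hence again analytic by Remmert. The first projection $\psi:C_5(W)\to W$ is then analytic, and $\psi^{-1}(q)$ is the affine cone over $\mathbb{P}(C_5(W,q))$, namely $C_5(W,q)$, as required.

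I expect the main obstacle to be the fibrewise identification of the second paragraph, i.e.\ checking that passing to the closure over the diagonal reproduces precisely the sequential definition of $C_5$; this becomes routine once $\mathcal{B}$ is recognised as the graph-closure, so the only real input is the standard analyticity of that closure. One caveat I would handle separately: if the bar in Definition \ref{defgenericproj}(a) denotes complex conjugation, I would run the construction for the holomorphic secant map and then apply the fibrewise involution $(q,[v])\mapsto(q,[\overline{v}])$ on the $\mathbb{P}^{n-1}$-factor; since conjugation carries a complex analytic set $\{F_i=0\}$ to the complex analytic set $\{\overline{F_i(\overline{\,\cdot\,})}=0\}$, analyticity is preserved and the fibres become the desired conjugated cones.
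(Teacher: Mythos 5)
Your proposal is correct and follows essentially the same route as the paper: both realize $C_5(W)$ by blowing up the diagonal of $W\times W$ (equivalently, taking the closure of the graph of the secant map in $(W\times W)\times\mathbb{P}^{n-1}$), identifying the exceptional fibre over $(q,q)$ with $\mathbb{P}C_5(W,q)$, and then deprojectivizing fibrewise, with $\psi$ induced by the diagonal embedding. The extra details you supply (Remmert's theorem for analyticity, the sequential check of the fibrewise identification, and the conjugation caveat) are exactly the points the paper delegates to Whitney \cite[Th. 5.1]{Whi652}, so there is nothing to flag.
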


\begin{proof}
      Consider the analytic space $W\times W$ and its blow-up along the diagonal
     \[e_\Delta: Bl_\Delta (W \times W)  \to W\times W\] 
     If we chose coordinates $(z_1,\ldots,z_n,w_1,\ldots,w_n)$ of the ambient space $\C^{2n}$, then we can obtain the space $Bl_\Delta (W \times W)$ as the closure of the graph of the secant map defined away from the diagonal $\Delta$ by:
     \begin{align*} W \times W \setminus \Delta &\longrightarrow \mathbb{P}^{n-1} \\
                            (z,w) & \longmapsto [z_1-w_1: \cdots: z_n-w_n]  \end{align*}
     So we have $Bl_\Delta (W \times W) $ as a subspace of $W \times W \times \mathbb{P}^{n-1}$,  the map $e_\Delta$ is induced by the projection, 
     and the exceptional fiber is the divisor $ D:=e_\Delta^{-1}(\Delta) \subset \Delta \times \mathbb{P}^{n-1}$  which comes with a map $D \to \Delta$ such that for every point 
     $(q,q) \in \Delta$ the fiber is the projective subvariety  corresponding to the projectivization of the $C_5$-cone of $W$ at $q$, that is $\mathbb{P}C_5(W,q)$.     
     This is roughly the way Whitney proved that the $C_5$-cone is an algebraic variety  in \cite[Th. 5.1]{Whi652}.  Finally,  if  $\lambda$ denotes     
      the embedding of $W$ in the diagonal
          \begin{align*}\lambda: W & \hookrightarrow W \times W \\ q & \longmapsto (q,q) \end{align*}
      then $C_5(W)$ is the space obtained by  deprojectivization of the (fibers of)  divisor $D$ and $\psi$ corresponds to the pullback of $e_\Delta$ by $\lambda$:
      
            \[ \xymatrix{  C_5(W) \ar[r] \ar[d]_\psi  & Bl_\Delta (W \times W) \ar[d]^{e_\Delta} \\
                          W \ar[r]_\lambda   &  W \times W}\]   
\end{proof}

\begin{remark}\label{C5facts} 
    
\noindent  (a) The analytic space $C_5(W)$ is of dimension $2 \cdot $dim$(W)$.\\

\noindent  (b) If $Y \subset W$ is a subvariety and $y \in Y$, then $C_5(Y,y) \subset C_5(W,y)$. In particular, when we think of $X \subset \mathbb{C}^n \times \mathbb{C}$ as a family of curves $p:X\rightarrow T$, with a section $\sigma(t):=(z,t)$, $z \in \mathbb{C}^n$, and fibers $X_t:=p^{-1}(t)$, we have that the $C_5$-cone, $C_5(X_t, (z,t))$, of the fiber is contained in the $C_5$-cone, $C_5(X,(z,t))$, of the surface.
   
\end{remark}

We can now state the main result of this section.

\begin{theorem}\label{thgenericproj} Let $p:(X,0)\rightarrow (\mathbb{C},0)$ be a family of generically reduced curves and suppose there is a good representative $p:X\rightarrow T$ with a section $\sigma:T \rightarrow X$ such that $\sigma(T)$ is smooth. \textit{There is a Zariski's open and dense set $\Lambda \subset G(2,n)$ such that for each $H \in \Lambda$, $\pi_H$ is a generic projection for $(X_t,\sigma(t))$ for all $t\in T$}.
\end{theorem}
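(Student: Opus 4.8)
The plan is to reduce the statement to the special fibre $(X_0,0)$ and then propagate genericity to the nearby fibres by an openness argument. Two facts drive this: the condition ``$\ker(\pi_H)\cap C_5=\{0\}$'' defining a $C_5$-generic projection (Definition \ref{defgenericproj}) is open, and the cones $C_5(X_t,\sigma(t))$ fit into a single analytic object whose central fibre is exactly $C_5(X_0,0)$.

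First I would fix the candidate set $\Lambda$. By the theorem of Briançon--Galligo--Granger quoted after Definition \ref{defgenericproj}, the cone $C_5(X_0,0)$ is a finite union of two-dimensional linear subspaces of $\mathbb{C}^n$. For a single $2$-plane $P$ the locus $\{H\in G(2,n): H\cap P\neq\{0\}\}$ is a proper Zariski-closed (Schubert) subvariety, since $\dim H+\dim P=(n-2)+2=n$ forces a generic $(n-2)$-plane to meet $P$ only at the origin. Taking the finite union over the planes of $C_5(X_0,0)$, I set $\Lambda:=\{H\in G(2,n): H\cap C_5(X_0,0)=\{0\}\}$, which is then Zariski-open and dense; by construction it is exactly the set of projections that are $C_5$-generic for $(X_0,0)$.

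Next I would organize the fibrewise cones into a family. Applying the construction of Lemma \ref{C5asblowup} \emph{relatively} to $p$ --- blowing up $X\times_T X$ along the relative diagonal and deprojectivizing the exceptional divisor over the section --- I obtain an analytic set $\mathcal{C}\subset\mathbb{C}^n\times T$ over $T$ whose fibre over $t$ is $C_5(X_t,\sigma(t))$ and whose central fibre is $C_5(X_0,0)$. Projectivizing in the $\mathbb{C}^n$-factor yields $\mathbb{P}\mathcal{C}\subset\mathbb{P}^{n-1}\times T$, which is proper over $T$. Now fix $H\in\Lambda$ and consider the bad-parameter set $\mathcal{B}_H:=\{t\in T: H\cap C_5(X_t,\sigma(t))\neq\{0\}\}$. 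Since $\mathbb{P}H$ is compact and $\mathbb{P}\mathcal{C}$ is proper over $T$, the set $\mathcal{B}_H$ is the image of the analytic set $\mathbb{P}\mathcal{C}\cap(\mathbb{P}H\times T)$ under the projection to $T$, hence analytic (closed in $T$) by the proper mapping theorem. As $H\in\Lambda$ gives $H\cap C_5(X_0,0)=\{0\}$, we have $0\notin\mathcal{B}_H$; a proper analytic subset of a disc not containing its centre becomes empty after shrinking $T$. Therefore $\pi_H$ is $C_5$-generic for $(X_t,\sigma(t))$ for all $t$ in a suitable representative, which is the claim, with $\Lambda$ Zariski-open and dense.

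The step I expect to be the main obstacle is the construction of $\mathcal{C}$ with the \emph{correct} central fibre, equivalently the semicontinuity $\lim_{t\to 0}C_5(X_t,\sigma(t))\subseteq C_5(X_0,0)$. The danger is that limits of secants of the individual fibres could a priori escape into the strictly larger ambient cone $C_5(X,\sigma(t))$ of the surface (Remark \ref{C5facts}(b)); one must genuinely use secants taken \emph{within} a single fibre, that is the relative diagonal of $X\times_T X$ rather than the absolute one, so that the exceptional fibre over $(\sigma(0),\sigma(0))$ is $\mathbb{P}C_5(X_0,0)$ rather than something of higher dimension. Once this identification is secured, the dimension control of Remark \ref{C5facts}(a) keeps $\mathbb{P}\mathcal{C}$ well behaved and the openness argument above closes the proof. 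I note that the representative $T$ on which a given $\pi_H$ works may have to shrink with $H$: this is forced, because the cones $C_5(X_t,\sigma(t))$ may rotate with $t$, and it is precisely why genericity is propagated from the central fibre rather than imposed by a single uniform dimension count over all $t$.
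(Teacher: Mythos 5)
There is a genuine gap, and it sits exactly where you flagged it: the claim that the relative secant construction has central fibre \emph{exactly} $C_5(X_0,0)$, i.e.\ the semicontinuity $\lim_{t\to 0}C_5(X_t,\sigma(t))\subseteq C_5(X_0,0)$. Using the relative diagonal of $X\times_T X$ only guarantees that each secant joins two points of a \emph{single} fibre $X_{t_n}$; it does not prevent $t_n\to 0$ with $t_n\neq 0$, and the limits of such secants land in the fibre over $0$ of the closure without being limits of secants of $X_0$. All one gets is a sandwich $C_5(X_0,0)\subseteq Z(0)\subseteq C_5(X,0)$, with the middle term possibly strictly larger than $C_5(X_0,0)$. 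Without an equisingularity hypothesis this really fails: in the paper's Example \ref{exemplo2.1}, $X$ is parametrized by $(u,t)\mapsto(tu,u^2,u^3,t)$; for $t\neq 0$ the fibre is smooth at $\sigma(t)$ with tangent (hence $C_5$-cone) the $x$-axis, while $C_5(X_0,0)$ is the plane $\{x=0\}$. Your candidate set $\Lambda=\{H: H\cap C_5(X_0,0)=\{0\}\}$ contains $H=x$-axis, yet $\ker\pi_H$ contains $C_5(X_t,\sigma(t))$ for \emph{every} $t\neq 0$, so $\pi_H$ is generic for no nearby fibre and no shrinking of $T$ rescues the conclusion. Concretely, your bad set $\mathcal{B}_H$ is all of $T$: the point of $\mathbb{P}\mathcal{C}$ over $t=0$ corresponding to the limiting $x$-axis direction lies in $\mathbb{P}H\times T$, so the step ``$H\cap C_5(X_0,0)=\{0\}$ implies $0\notin\mathcal{B}_H$'' is where the argument breaks --- it would need $H$ to avoid the (possibly larger) limit fibre, not just $C_5(X_0,0)$.

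The paper's proof is structured precisely to avoid this identification. It never asserts what the central fibre of the relative construction is; instead it (i) bounds its dimension --- since $Z$ is $3$-dimensional and the section is $1$-dimensional, after shrinking the fibres $Z((z,t))$ over $\sigma(T)\setminus\{0\}$ have dimension $2$, so the limit cone $Z_{T,0}=\bigl(\overline{\Psi^{-1}(\sigma(T))\setminus\Psi^{-1}(0)}\bigr)_0$ has dimension at most $2$, and a dense Zariski-open $\Lambda'\subset G(2,n)$ of kernels misses it, handling all $t\neq 0$; and (ii) separately intersects with the dense open set $\Lambda''$ of projections generic for the special fibre $(X_0,0)$, taking $\Lambda=\Lambda'\cap\Lambda''$. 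Your proposal in effect assumes $Z_{T,0}=C_5(X_0,0)$ and thereby collapses $\Lambda'$ into $\Lambda''$; the repair is to keep both constraints, i.e.\ your $\Lambda$ must also exclude kernels meeting the limit cone $Z_{T,0}$, whose only a priori property is the dimension bound $\dim Z_{T,0}\leq 2$ --- which is all the genericity argument actually needs. With that correction your openness/proper-mapping mechanism for propagating transversality to nearby $t$ is fine and matches the paper's ``nearby fibres are also transversal'' step.
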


\begin{proof} 
   Following the proof of Lemma \ref{C5asblowup} we will begin by constructing an analytic space $Z$ such that the fibers of $\Psi:Z  \to X$, $Z((z,t)):=\Psi^{-1}(z,t)$, are cones in $\C^{n+1}$
   lying between the $C_5$-cones of the fiber $X_t$ and the one the surface $X$, {\it i.e.}: 
      \[ C_5(X_{t}, (z,t)) \subset Z((z,t)) \subset C_5(X,(z,t))\] 
	We will then construct a projection whose kernel is transversal to the fibers $Z((z,t))$.      
      
    Consider the fibered product $X \times_p X \subset X \times X$. It is analytic of dimension 3, defined by the equation $t=\tau$ in the coordinate system $(z_1,\ldots, z_n,t,w_1,\ldots,w_n,\tau)$. Now we blow-up the diagonal $\Delta$, and as before:
     
     \[ \xymatrix{    Z  \ar[r] \ar[d]_\Psi  & Bl_\Delta (X \times_p X)  \ar[r] \ar[d]^{\widetilde{e_\Delta}}   & Bl_\Delta (X \times X) \ar[d]^{e_\Delta} \\
                          X \ar[r]_\lambda & X\times_p X \ar@{^{(}->}[r]   &  X \times X}\]
    we obtain the space $Z$  as the deprojectivization of the (fibers of) divisor $\widetilde{e_\Delta}^{-1}(\Delta)$ and $\Psi$ corresponds to the pullback of $\widetilde{e_\Delta}$
    by $\lambda$. Note that  $Z$ is a 3-dimensional space constructed by taking limits of secants in $X$ with the restriction that the two points of $X$ defining the secant must be in the same fiber $X_t$.  
    
    Now,  if $(z,t)$ is a singular point of $X$ then it is a singular point of the fiber $X_{t}$ and we have that  
      \[ \mathrm{dim} \, C_5(X_{t}, (z,t))= 2 \leq \, \mathrm{dim} \, Z((z,t)) \leq\,  \mathrm{dim}\, C_5(X,(z,t)) \leq 4\] 
      
    Since ${\rm dim}\, Z=3$ and ${\rm dim}\, \sigma (T) =1$, then the set $\{ (z,t) \in \sigma(T): \ dim \ Z((z,t))>2 \}$ is either empty or of dimension zero. So we may assume that in a small 
enough representative of $X$ we have $dim \ Z((z,t))=2$ for all $(z,t)\neq 0 \in \sigma(T)$. If we define $Z_T:= \overline{\Psi^{-1}(\sigma(T))\setminus {\Psi^{-1}(0)}}$ then it is an analytic space of dimension at most 3 and the induced map $Z_T \to \sigma(T)$ has fibers of dimension at most 2. In particular, its special fiber, $Z_{T,0}$ has dimension at most two. So there exists a Zarisk's open dense set $\Lambda^{'} \subset G(2,n)$ such that for 
  each $H \in \Lambda$, the corresponding linear projection $\pi_H:\mathbb{C}^n \rightarrow \mathbb{C}^2$ has a kernel, $H$, transversal to the fiber $Z_{T,0}$. Since $Z_T$ is analytic, nearby fibers will also be transversal to $H$. Recalling that $C_5(X_t, \sigma(t))\subset Z_{(T, \sigma(t))}$ we conclude that 

\begin{center}
$C_5(X_t,\sigma(t)) \cap H =\lbrace 0\rbrace \subset \mathbb{C}^n  $
\end{center}
for every sufficiently small $t$. In other words, for $H \in \Lambda^{'}$, the projection $\pi_H$ is a $C_5$-generic for $X_t$, $t \neq 0$. Now, there is a Zariski's open and dense set $\Lambda^{''} \subset G(2,n)$ such that for each $H \in \Lambda^{''}$, the projection $\pi_H$ is $C_5$-generic for $(X_0,0)$ (see \cite[Prop. IV.2]{briancon}). Hence, $\Lambda=\Lambda^{'}\cap \Lambda^{''} \subset G(2,n)$ is a Zariski's open and dense set such that for each $H \in \Lambda$ we have that $\pi_H$ is a $C_5$-generic projection for $(X_t,\sigma(t))$ for all $t$.\end{proof}

In the sequel, we will state some consequences of Theorem \ref{thgenericproj} that are well known to specialists, however the authors did not find a complete written proof of them. 
  
\begin{corollary}\label{corequisaturated} Consider: $p : X \rightarrow T$ as in Theorem \rm \ref{thgenericproj}. \textit{Suppose that $(X,0)$ is a family of reduced curves and consider a projection $\pi \times id: \mathbb{C}^n \times \mathbb{C}\rightarrow \mathbb{C}^2 \times \mathbb{C}$ such that $\pi$ is generic for all $X_t$, $t\in T$. Set $\tilde{X}:=(\pi \times id) (X)$ and consider the family of plane curves $\tilde{p}:\tilde{X}\rightarrow T$. Then the following statements are equivalent:}\\

\noindent \textit{(a) The family of curves $X$ is equisaturated.}\\
\noindent \textit{(b) The projected surface $\tilde{X}$ is a topologically trivial family of plane curves.}\\
\noindent \textit{(c) The family of curves $X$ is bi-Lipschitz equisingular.}
\end{corollary}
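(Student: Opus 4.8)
The plan is to run the cycle (a) $\Rightarrow$ (b) $\Rightarrow$ (c) $\Rightarrow$ (a), using Theorem \ref{teotoptrivial}(c) to handle the saturation condition and bi-Lipschitz geometry as the device that links the curve family $X$ with its projection $\tilde X$. The guiding idea is that, because $\pi$ is generic for every fiber, the map $\pi\times id$ should be a fiberwise bi-Lipschitz homeomorphism onto $\tilde X$, with constants independent of $t$, so that bi-Lipschitz trivializations transport freely between $X$ and $\tilde X$. This lets me avoid comparing the $\delta$-invariants of $X_t$ and $\tilde X_t$ directly (which is delicate, since a generic projection can increase the $\delta$ of an individual branch), and to deduce the topological triviality of $X$ from that of $\tilde X$ through the Lipschitz structure.

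For (a) $\Rightarrow$ (b): if $X$ is equisaturated, then by Theorem \ref{teotoptrivial}(c) the topological type of the generic projection $\tilde X_t=\pi(X_t)$ is independent of $t$. For a family of reduced plane curves, constancy of the embedded topological type of the fibers is equivalent to topological triviality of the family (the Milnor number is then constant and one invokes the plane-curve triviality theorem). Hence $\tilde X$ is a topologically trivial family of plane curves, which is exactly (b).

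The heart of the argument, which I would isolate as a lemma, is that the restriction $(\pi\times id)|_X\colon X\to\tilde X$ is bi-Lipschitz for the outer metric. One inequality is immediate since $\pi$ is linear. For the reverse I would argue by contradiction: a failure of a uniform bound $|\pi(x)-\pi(y)|\ge c\,|x-y|$ for $x,y$ lying in a common fiber would, by a compactness argument, produce a sequence of secants of $X$ with endpoints in the same fiber whose directions converge to a line contained in $\ker\pi$. Such a limiting direction lies in the relative secant cone $Z$ constructed in the proof of Theorem \ref{thgenericproj}, and that proof produces the common generic projection precisely so that $\ker\pi$ is transversal to $Z$ (and hence, via the containment $C_5(X_t,\sigma(t))\subset Z_{(T,\sigma(t))}$, to each $C_5(X_t,\sigma(t))$); this contradiction yields the uniform bound. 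Since $\pi\times id$ commutes with the maps to $T$, the resulting bi-Lipschitz homeomorphism transports a bi-Lipschitz trivialization of either family to the other. This is where Theorem \ref{thgenericproj} is genuinely used, and I expect the uniformity of the constant $c$ in $t$ to be the main technical obstacle.

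With this lemma the remaining implications are formal. For (b) $\Rightarrow$ (c): a topologically trivial family of plane curves is bi-Lipschitz trivial, since for plane curves the outer bi-Lipschitz type coincides with the embedded topological type (the Pham--Teissier saturation being a complete outer-Lipschitz invariant, see \cite[p.~36]{briancon}); pulling the trivialization of $\tilde X$ back through the bi-Lipschitz map above gives a bi-Lipschitz trivialization of $X$, that is, $X$ is bi-Lipschitz equisingular. For (c) $\Rightarrow$ (a): a bi-Lipschitz trivialization of $X$ is in particular a topological trivialization, so $X$ is topologically trivial; pushing it forward through $\pi\times id$ makes $\tilde X$ bi-Lipschitz, hence topologically, trivial, so the topological type of the generic projection is constant in $t$. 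Topological triviality of $X$ together with constancy of the projection type is exactly equisaturation by Theorem \ref{teotoptrivial}(c), which gives (a) and closes the cycle. The two essential inputs are thus the uniform bi-Lipschitz property of the common generic projection (resting on Theorem \ref{thgenericproj}) and the identification of topological and outer-Lipschitz triviality for plane curves; the former is the step I anticipate to be the most demanding.
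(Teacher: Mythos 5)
Your implications (a) $\Rightarrow$ (b) and (c) $\Rightarrow$ (a) are essentially sound (and close to the paper, which gets (a) $\Leftrightarrow$ (b) from Theorem \ref{thgenericproj} together with \cite[Théorème IV.8]{briancon}), but the load-bearing step (b) $\Rightarrow$ (c) rests on your key lemma, and that lemma has a genuine gap: you need $(\pi\times id)|_X\colon X\to\tilde X$ to be bi-Lipschitz as a map of the \emph{surface} germ in order to transport a trivialization of $\tilde X$ back to $X$, but your compactness argument only controls secants of $X$ whose two endpoints lie in a \emph{common} fiber. Those are exactly the secants recorded by the relative space $Z$ of Theorem \ref{thgenericproj}, so at best you obtain a fiberwise bi-Lipschitz statement with a constant uniform in $t$. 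A global co-Lipschitz bound for $(\pi\times id)|_X$ would instead require $\ker(\pi\times id)=\ker\pi\times\{0\}$ to miss the full cone $C_5(X,0)$ of the surface, and cross-fiber secants (pairs $x\in X_t$, $y\in X_{t'}$ with $|t-t'|$ small relative to $|x-y|$) produce horizontal limit directions that live in $C_5(X,0)\cap\{t=0\}$ but not in $Z$. Since $\dim C_5(X,0)=4$ in general, this horizontal slice typically has dimension $3$, i.e.\ projective dimension $2$ in $\mathbb{P}^{n-1}$; as $\mathbb{P}(\ker\pi)$ has dimension $n-3$ and $(n-3)+2=n-1$, \emph{every} admissible kernel meets the slice, so no choice of $\pi$ makes the restriction globally bi-Lipschitz. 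Hence bi-Lipschitz trivializations do not ``transport freely'' through the projection, and (b) $\Rightarrow$ (c) as you argue it fails; note also that your proposed proof of the lemma uses no equisingularity hypothesis at all, so if it worked it would apply to arbitrary families, which is already implausible.

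The paper circumvents this in both directions by never using a global Lipschitz property of the projection. For (a) $\Rightarrow$ (c) it invokes Pham--Teissier (\cite[Théorème 4]{pham2}): equisaturation yields a bi-Lipschitz trivialization directly from the saturation of the local rings, with no projection involved. For (c) $\Rightarrow$ (b) it uses only \emph{fiberwise} bi-Lipschitz maps: for each fixed $t$, the generic projection $\pi_t\colon X_t\to\tilde X_t$ is bi-Lipschitz (\cite{Tei82}, \cite[Thm.~5.1]{pichon}), so composing with the restriction $h_t$ of the trivialization gives a bi-Lipschitz homeomorphism $\tilde X_t\to\tilde X_0$ for each $t$; then \cite[Theorem 1.1]{pichon} upgrades this to equality of embedded topological types of the plane curves, and constancy of the topological type gives topological triviality of $\tilde X$. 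Your (c) $\Rightarrow$ (a) can be salvaged along the same lines (you only need that $(\pi\times id)|_X$ is a homeomorphism, which is true, not that it is bi-Lipschitz), and your (b) $\Rightarrow$ (c) should be rerouted as (b) $\Rightarrow$ (a) $\Rightarrow$ (c), with the last arrow supplied by Pham--Teissier rather than by transporting the trivialization of $\tilde X$.
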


\begin{proof}(a) $\Leftrightarrow$ (b) This equivalence is a consequence of Theorem \ref{thgenericproj} and \cite[Théorème IV.8]{briancon}. The proof of (a) $\Rightarrow$ (c) follows by \cite[Théorème 4]{pham2}.\\ 

\noindent (c) $\Rightarrow$ (b) Since $X$ is bi-Lipschitz equisingular, there is a bi-Lipschitz homemomorphism $h:X\rightarrow X_0 \times T$. In particular, the restriction $h_t:X_t\rightarrow X_0 \times \lbrace t \rbrace \simeq X_0$ is a bi-Lipschitz homeomorphism for each $t \in T$. We have that $\pi_t:=\pi \times \lbrace t \rbrace: X_t \rightarrow \tilde{X}_t$ is a bi-Lipschitz homeomorphism (\cite{Tei82}, see also \cite{pichon}, Thm. $5.1$). Hence, the map $\pi_t \circ h_t \circ \pi_t^{-1}: \tilde{X}_t\rightarrow \tilde{X}_0$ is a bi-Lipschitz homeomorphism. By \cite[Theorem $1.1$]{pichon}, the curves $(\tilde{X}_0,0)$ and $(\tilde{X}_t,\sigma(t))$ have the same (embedded) topological type, hence the family $p:\tilde{X}\rightarrow T$ is topologically trivial.\end{proof}

\begin{remark} In the proof of Corollary \rm\ref{corequisaturated} \textit{we made use of} \rm\cite[Théorème IV.8]{briancon}, \textit{which is stated for a deformation of a plane curve $(X_0,0)$. However, if one analyzes the proof of this result, the hypothesis that $(X_0,0)$ is a plane curve is not needed. The authors believe this is a typing error in} \rm\cite{briancon}, \textit{since this result is used in an example where $(X_0,0)$ is not a plane curve (see} \rm\cite[Exemple V.3]{briancon}\textit{)}.
\end{remark}

\begin{corollary}\label{corequibili} If the family of reduced curves $X$ is bi-Lipschitz equisingular, then it is Whitney equisingular.
\end{corollary}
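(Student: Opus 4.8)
The plan is to route everything through the characterization of Whitney equisingularity in Theorem \ref{teotoptrivial}(b), which reduces the statement to showing that both the Milnor number $\mu(X_t,\sigma(t))$ and the multiplicity $m(X_t,\sigma(t))$ are constant in $t$. The first of these is essentially immediate, while the second is where the work lies.

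First I would recall that, by Definition \ref{deftoptrivial}(b), a bi-Lipschitz equisingular family is in particular topologically trivial; hence Theorem \ref{teotoptrivial}(a) already gives that $\mu(X_t,\sigma(t))$ is constant for all $t\in T$. It remains to control the multiplicity. For this I would invoke Corollary \ref{corequisaturated}, whose equivalence (a) $\Leftrightarrow$ (c) tells us that the family $X$ is equisaturated. Now Theorem \ref{teotoptrivial}(c) applies: equisaturation implies that the embedded topological type of a generic plane projection $(\tilde{X}_t,\sigma(t)) \subset (\mathbb{C}^2,0)$ does not depend on $t\in T$.

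The key step is then to pass from the constancy of the topological type of the projected plane curves to the constancy of $m(X_t,\sigma(t))$. I would use two classical facts: (i) a $C_5$-generic projection to $\mathbb{C}^2$ preserves the multiplicity, so that $m(X_t,\sigma(t)) = m(\tilde{X}_t,\sigma(t))$ for every $t$; and (ii) for reduced plane curve germs the multiplicity is an invariant of the embedded topological type, since it is encoded in the Puiseux data of the branches together with their pairwise intersection numbers. Combining these, the constant topological type of the $\tilde{X}_t$ forces $m(\tilde{X}_t,\sigma(t))$, and hence $m(X_t,\sigma(t))$, to be constant. With both $\mu(X_t,\sigma(t))$ and $m(X_t,\sigma(t))$ constant, Theorem \ref{teotoptrivial}(b) yields that $X$ is Whitney equisingular.

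The main obstacle I anticipate is fact (ii), together with the need to match the notion of ``topological type of the generic projection'' appearing in Theorem \ref{teotoptrivial}(c) and Corollary \ref{corequisaturated} with the \emph{embedded} topological type of a plane curve germ; only under this identification is one entitled to invoke the topological invariance of the multiplicity of a plane curve. Once this point is pinned down, the remainder of the argument is bookkeeping through the equivalences already established in Theorem \ref{teotoptrivial} and Corollary \ref{corequisaturated}.
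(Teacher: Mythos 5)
Your proof is correct, and it diverges from the paper at the key step. The paper's own proof is a one-line citation: it deduces equisaturation from Corollary \ref{corequisaturated} exactly as you do, and then invokes \cite[Corollaire IV.9]{briancon} to pass from equisaturation to Whitney equisingularity (the same reference is used in the proof of Corollary \ref{cor1} to extract constancy of $m(X_t,\sigma(t))$ from equisaturation). You instead reconstruct that second step inside the paper's own framework: Theorem \ref{teotoptrivial}(c) gives constancy of the topological type of the generic plane projections, fact (i) (invariance of multiplicity under $C_5$-generic projection) is exactly what the paper itself uses via \cite[Theorem VI.1.6]{pham} in the proof of Proposition \ref{propprinc}(a), and fact (ii) (the multiplicity of a reduced plane curve germ is an invariant of its embedded topological type) is classical, going back to Zariski: the embedded topological type determines the Puiseux characteristics of the branches and their pairwise intersection numbers, hence the branch multiplicities and their sum. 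Your anticipated obstacle about matching notions is real but unproblematic here: the paper's notion of topological equivalence (stated in the Introduction) is precisely the \emph{embedded} one, and the well-definedness of the topological type of a generic projection, independently of the choice of $C_5$-generic $\pi$, is recorded right after Definition \ref{defs} citing \cite[Prop. IV.2]{briancon}. With $\mu(X_t,\sigma(t))$ constant from topological triviality (Theorem \ref{teotoptrivial}(a)) and $m(X_t,\sigma(t))$ constant as above, Theorem \ref{teotoptrivial}(b) closes the argument. In summary: the paper's route is shorter but rests entirely on the external Corollaire IV.9, whereas yours is more self-contained relative to the results stated in the paper, at the cost of importing the classical topological invariance of the multiplicity of plane curves; in substance your argument amounts to an in-paper proof of the implication that Corollaire IV.9 supplies as a black box.
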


\begin{proof} The proof follows by Corollary \ref{corequisaturated} and \cite[Corollaire IV.9]{briancon}.\end{proof}

\section{The problem of the number of tangents}\label{sec3}

$ \ \ \  $ When $X$ is a topologically trivial family of generically reduced curves, we know by \cite[Theorem 4.4]{jawad} that it admits a normalization in family. So if in addition $(X,0)$ is irreducible, then each curve $(X_t,\sigma(t))$ is also irreducible. Therefore, all the fibers have the same number of tangents, one for each fiber. Throughout this section, we will consider equisingular families of curves with reducible deformation space. 

\subsection{The $s$-invariant}\label{sections}

$ \ \ \  $ In this section we give an answer to Question 1 stated in the introduction. We introduce the $s$-invariant that gives us some information on tangency between two curves. For two germs of plane curves $(\mathscr{C},0)$ and $(\mathscr{C}^{'},0)$, we denote the intersection multiplicity of $(\mathscr{C},0)$ and $(\mathscr{C}^{'},0)$ at $0$ by $i(\mathscr{C},\mathscr{C}^{'},0)$ (see for instance \cite[Sec. $3.2$]{greuel16}).

\begin{definition}\label{defs}
Let $(\mathscr{C},0) \subset (\mathbb{C}^n,0)$ be a germ of reduced curve and denote by $(\mathscr{C}^{i},0)$, $i=1,\cdots, r$, its irreducible components. Let $\pi: (\mathscr{C},0)\rightarrow (\mathbb{C}^2,0)$ be a $C_5$-generic projection of $(\mathscr{C},0)$, then $(\pi(\mathscr{C}),0)$ is a germ of reduced plane curve which we will denote by $(\tilde{\mathscr{C}},0)$ and call $(\tilde{\mathscr{C}}^{i},0)$ its irreducible components. Suppose that $r\geq 2$, then we define the $s$-invariant of $(\mathscr{C},0)$ at the point $0$ as:

\begin{center}
$s(\mathscr{C},0):=  \displaystyle { \sum_{1\leq i<j \leq r}^{}}i(\tilde{\mathscr{C}}^{i},\tilde{\mathscr{C}}^{j},0) $
\end{center}

\noindent If $(\mathscr{C},0)$ is a germ of generically reduced curve, we consider the reduced structure of the $C_5$-generic projection and define $s(\mathscr{C},0)$ in the same way. 
\end{definition}

Note that the $s$-invariant is well defined. If $(\mathscr{C},0)$ is reduced and $\pi,\pi^{'}:(\mathscr{C},0)\rightarrow (\mathbb{C}^2,0)$ are two generic projections of $(\mathscr{C},0)$, then the topological type of $(\pi(\mathscr{C}),0)$ and $(\pi^{'}(\mathscr{C}),0)$ are the same, therefore the intersection multiplicities of the branches of both projections are the same (see \cite{briancon}, Proposition IV.2). 

The invariant is also well defined for generically reduced curves. In fact, since $(\mathscr{C},0)$ is generically reduced and $\pi$ is an isomorphism outside the origin, by \cite[Prop. 1.5]{pellikaan} we have that $( \tilde{\mathscr{C}},0)$ is also generically reduced, considering its structure induced by Fitting ideals.  By definition of the $C_5$-cone, we have that a projection $\pi:\mathbb{C}^n\rightarrow \mathbb{C}^2$ is $C_5$-generic for $(\mathscr{C},0)$ if and only if is $C_5$-generic for the reduced curve $(|\mathscr{C}|,0)$. Let $\pi:\mathbb{C}^n\rightarrow \mathbb{C}^2$ be a $C_5$-generic projection for $(\mathscr{C},0)$ and consider the inclusions $\iota_1:(|\mathscr{C}|,0) \hookrightarrow (\mathscr{C},0)$ and $\iota_2:(|\tilde{\mathscr{C}|},0) \hookrightarrow (\tilde{\mathscr{C}},0)$. Thus, we have that $ \pi_{|_{(\mathscr{C},0)}} \circ \iota_1 = \iota_2 \circ  \pi_{|_{(|\mathscr{C}|,0)}}$, hence $s$ is well defined also in this case.\\

The following two propositions show that the $s$-invariant satisfies the properties we need in order to study tangency.

\begin{proposition}\label{propprinc} Let $(\mathscr{C},0)$ be a germ of curve as in Definition \rm \ref{defs}.  \textit{Then}\\

\noindent \textit{(a) $s(\mathscr{C},0) \geq \displaystyle{\sum_{1\leq i<j \leq r}^{}} \ m(\mathscr{C}^{i},0)\cdot m(\mathscr{C}^{j},0)$ }.\\

\noindent \textit{(b) $s(\mathscr{C},0) = \displaystyle{\sum_{1\leq i<j \leq r}^{}} \ m(\mathscr{C}^{i},0)\cdot m(\mathscr{C}^{j},0)$ if and only if $(\mathscr{C}^{i},0)$ and $(\mathscr{C}^{j},0)$ do not have the same tangent for any $i\neq j$.}
\end{proposition}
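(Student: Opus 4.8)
The plan is to reduce everything to two classical facts about plane curve branches, combined with the geometry of $C_5$-generic projections, so that both assertions become pairwise statements $i(\tilde{\mathscr{C}}^i,\tilde{\mathscr{C}}^j,0)\ge m(\mathscr{C}^i,0)\,m(\mathscr{C}^j,0)$ summed over $i<j$. The first ingredient is the standard intersection inequality for plane branches: for irreducible germs $(D,0),(D',0)\subset(\mathbb{C}^2,0)$ one has $i(D,D',0)\ge m(D,0)\,m(D',0)$, with equality if and only if $D$ and $D'$ have distinct tangent lines (see e.g. \cite[Sec. 3.2]{greuel16}). The second ingredient is that a $C_5$-generic projection preserves, branch by branch, both the multiplicity and the tangent direction.

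First I would make the preservation statements precise. Since $\mathscr{C}^i$ is irreducible, its tangent cone is a single line $T_i\subset\mathbb{C}^n$, and $T_i\subset C_5(\mathscr{C},0)$; by $C_5$-genericity $\ker\pi\cap C_5(\mathscr{C},0)=\{0\}$, so $\pi|_{T_i}$ is injective. Writing a primitive parametrization $\gamma_i(u)=u^{m_i}v_i+\cdots$ of $\mathscr{C}^i$ with $v_i$ spanning $T_i$ and $m_i=m(\mathscr{C}^i,0)$, the composite $\pi\circ\gamma_i(u)=u^{m_i}\pi(v_i)+\cdots$ still has order $m_i$ because $\pi(v_i)\ne 0$; hence $m(\tilde{\mathscr{C}}^i,0)=m(\mathscr{C}^i,0)$ and the tangent line of $\tilde{\mathscr{C}}^i$ is $\pi(T_i)$. (Transversality of $\ker\pi$ to the secant part of $C_5(\mathscr{C},0)$ also guarantees that $\pi$ is one-to-one on a small representative, so distinct branches $\mathscr{C}^i$ give distinct image branches $\tilde{\mathscr{C}}^i$, which legitimizes the matching in Definition \ref{defs}.) Applying the plane inequality to each pair $(\tilde{\mathscr{C}}^i,\tilde{\mathscr{C}}^j)$ and summing over $i<j$ then yields (a) immediately.

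For (b) the decisive point is the equivalence $T_i=T_j\iff\pi(T_i)=\pi(T_j)$, i.e. that $(\mathscr{C}^i,0),(\mathscr{C}^j,0)$ share a tangent exactly when $(\tilde{\mathscr{C}}^i,0),(\tilde{\mathscr{C}}^j,0)$ do. The forward direction is immediate, since $T_i=T_j$ gives $\pi(T_i)=\pi(T_j)$. The hard part is the converse, and this is where $C_5$-genericity is used in full. When $T_i\ne T_j$, the plane $P:=T_i+T_j$ is entirely contained in $C_5(\mathscr{C},0)$: taking $x(u)\in\mathscr{C}^i$ and $y(s)\in\mathscr{C}^j$ tending to $0$, the secants $x(u)-y(s)=u^{m_i}v_i-s^{m_j}v_j+\cdots$ realize, after rescaling and passing to the limit, every direction $\alpha v_i+\beta v_j\in P$; this is consistent with the Briançon--Galligo--Granger description of $C_5(\mathscr{C},0)$ as a finite union of planes each containing a tangent (\cite[Th. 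IV.1]{briancon}). Consequently $\ker\pi\cap P=\{0\}$, so $\pi|_P$ is injective and $\pi(T_i)\ne\pi(T_j)$. Combining this equivalence with the equality clause of the plane inequality gives (b): the lower bound is attained exactly when no two $\tilde{\mathscr{C}}^i,\tilde{\mathscr{C}}^j$ share a tangent, equivalently when no two $\mathscr{C}^i,\mathscr{C}^j$ do.

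I expect the containment $P\subset C_5(\mathscr{C},0)$ to be the step requiring the most care: it is precisely the mixed-secant computation that forces the $C_5$-genericity condition to \emph{separate} the tangents of distinct branches, rather than merely preserving each individual tangent line. Finally, the generically reduced case reduces to the reduced one, since both the $s$-invariant and the multiplicities $m(\mathscr{C}^i,0)$ are computed on the reduced structure $(|\mathscr{C}|,0)$ (cf. the Remark following Example \ref{example2.2}).
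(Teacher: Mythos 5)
Your proof is correct, and its skeleton is the same as the paper's: project by a $C_5$-generic $\pi$, invoke the plane-branch inequality $i(\tilde{\mathscr{C}}^{i},\tilde{\mathscr{C}}^{j},0)\geq m(\tilde{\mathscr{C}}^{i},0)\cdot m(\tilde{\mathscr{C}}^{j},0)$ with equality exactly for distinct tangents \cite[Prop.~3.21]{greuel16}, and transport multiplicity and tangency through $\pi$. The difference is in how the two transport facts are established. For multiplicity preservation the paper cites Serre's additivity formula \cite{serre} together with \cite[Theorem~VI.1.6]{pham}, while you give a direct parametrization argument using $T_i\subset C_5(\mathscr{C},0)$ and injectivity of $\pi$ on $T_i$; note that to conclude $m(\tilde{\mathscr{C}}^{i},0)=m_i$ from the order of $\pi\circ\gamma_i$ you also need $\pi\circ\gamma_i$ to be primitive, which does follow from the injectivity of $\pi|_{\mathscr{C}}$ near $0$ that you record parenthetically, so no gap. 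For tangency the paper merely asserts that a $C_5$-generic projection preserves tangency between branches (``it is not hard to see''), whereas you actually prove the nontrivial direction: when $T_i\neq T_j$, your mixed-secant computation with $u^{m_i}=\alpha\epsilon$, $s^{m_j}=-\beta\epsilon$ shows $T_i+T_j\subset C_5(\mathscr{C},0)$, hence $\ker\pi\cap(T_i+T_j)=\{0\}$ and $\pi(T_i)\neq\pi(T_j)$; this is precisely the content implicit in the Briançon--Galligo--Granger structure of the $C_5$-cone \cite[Th.~IV.1]{briancon}. So your write-up is more self-contained (it supplies the separation step that is the real heart of part (b) and is left to the reader in the paper), at the cost of length, while the paper's version buys brevity by outsourcing multiplicity preservation to \cite{pham}; your reduction of the generically reduced case to the reduced one matches the paper's treatment via the remark that multiplicities only see the reduced structure \cite[Lemma~4.8]{otoniel2}.
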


\begin{proof}

(a) By the additivity formula of Serre (see \cite[Chapter V]{serre}), for a generically reduced curve $\mathscr{C}$, we have  $m(\mathscr{C},0)=m(|\mathscr{C}|,0)$ and $m(\mathscr{C}^{i},0)=m(|\mathscr{C}|^{i},0)$ for all $i$ (see also \cite[Lemma 4.8]{otoniel}). So, by \cite[Theorem VI.1.6]{pham} we conclude that $m(\mathscr{C}^{i},0)=m(\tilde{\mathscr{C}}^{i},0)$ for all $i$. Statement $(a)$ now follows since $i(\tilde{\mathscr{C}}^{i},\tilde{\mathscr{C}}^{j},0)\geq m(\tilde{\mathscr{C}}^{i},0)\cdot m(\tilde{\mathscr{C}}^{j},0)$ (see for instance \cite[Prop. $3.21$]{greuel16}).\\

\noindent(b) It is not hard to see that a $C_5$-generic projection $\pi:(\mathscr{C},0)\rightarrow (\mathbb{C}^2,0)$ preserves tangency between curves, that is $(\mathscr{C}^{i},0)$ and $(\mathscr{C}^{j},0)$ are tangent if and only if $(\tilde{\mathscr{C}}^{i},0)$ and $(\tilde{\mathscr{C}}^{j},0)$ are tangent. Statement $(b)$ now follows since $i(\tilde{\mathscr{C}}^{i},\tilde{\mathscr{C}}^{j},0)= m(\tilde{\mathscr{C}}^{i},0)\cdot m(\tilde{\mathscr{C}}^{j},0)$ if and only if $(\mathscr{C}^{i},0)$ and $(\mathscr{C}^{j},0)$ are not tangent (see \cite[Prop. $3.21$]{greuel16}).\end{proof}

Next proposition shows that under the hypothesis of topological triviality the invariant $s$ is upper semi-continuous.

\begin{proposition}\label{propprinc2}  Let $p:(X,0) \subset (\mathbb{C}^{n+1},0) \rightarrow (\mathbb{C},0)$ be a family of generically reduced curves and suppose there is a good representative $p:X\rightarrow T$ with a section $\sigma:T \rightarrow X$ such that both $\sigma(T)$ and $X_t \setminus \sigma(t)$ are smooth for $t \in T$. If $X$ is topologically trivial, then $s(X_0,0)\geq s(X_t,\sigma(t)), \ t\in T$.
\end{proposition}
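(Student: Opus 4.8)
The plan is to reduce the statement to the upper semicontinuity of the intersection multiplicity of two plane-curve branches in a family, which becomes available once the whole family has been projected by a single common generic projection.

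First I would invoke Theorem \ref{thgenericproj} to fix one linear projection $\pi=\pi_H$, $H\in\Lambda$, that is simultaneously $C_5$-generic for every fibre $(X_t,\sigma(t))$. Setting $\tilde{X}:=(\pi\times\mathrm{id})(X)\subset\mathbb{C}^2\times T$, the fibres $\tilde{X}_t=\pi(X_t)$ are then admissible $C_5$-generic projections in the sense of Definition \ref{defs}, so that $s(X_t,\sigma(t))=\sum_{i<j} i(\tilde{X}_t^{i},\tilde{X}_t^{j},\sigma(t))$ is computed using this single $\pi$ for all $t$. Since $X$ is topologically trivial it admits a normalization in family (as recalled at the opening of Section \ref{sec3}); this lets me follow the branches of the reduced fibres coherently, producing irreducible surfaces $X^{1},\dots,X^{r}\subset X$ with $X_t^{i}$ the $i$-th branch of $|X_t|$, where $r=r(X_t,\sigma(t))$ is constant by Theorem \ref{teotoptrivial}(a). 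Their images $\tilde{X}^{i}:=(\pi\times\mathrm{id})(X^{i})$ are reduced hypersurfaces in $\mathbb{C}^2\times T$ whose fibres are exactly the branches $\tilde{X}_t^{i}$, and the label $i$ now matches branches across all $t$. It therefore suffices to prove, for each fixed pair $i\neq j$, the inequality $i(\tilde{X}_0^{i},\tilde{X}_0^{j},0)\geq i(\tilde{X}_t^{i},\tilde{X}_t^{j},\sigma(t))$ for $t$ near $0$; summing over pairs gives the proposition.

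To establish this I would choose coordinates $(X,Y,t)$ on $\mathbb{C}^2\times T$ with $\sigma(t)=0$ in every fibre, and reduced defining equations $F(X,Y,t)$, $G(X,Y,t)$ for the two branch surfaces $\tilde{X}^{i}$, $\tilde{X}^{j}$. Because distinct branches of a fixed fibre meet at the singular point, the germ at the origin of $\tilde{X}^{i}\cap\tilde{X}^{j}$ is supported on the section $\sigma(T)$ together with the curves traced out by the extra crossings (nodes) of the two branches, all of which pass through the origin as $t\to 0$. Consequently $M:=\mathbb{C}\{X,Y,t\}/(F,G)$ is a finitely generated $\mathbb{C}\{t\}$-module: its reduction modulo $t$ is $\mathbb{C}\{X,Y\}/(F_0,G_0)$, of finite $\mathbb{C}$-dimension $i(\tilde{X}_0^{i},\tilde{X}_0^{j},0)$.

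Finally, passing to a small representative over a disc $D\ni 0$, the fibre dimension $\dim_\mathbb{C}\bigl(M\otimes_{\mathcal{O}_D}k(t)\bigr)$ of the finite $\mathcal{O}_D$-module $M$ is upper semicontinuous and equals, over each $t$, the sum $\sum_{P}i(\tilde{X}_t^{i},\tilde{X}_t^{j},P)$ over the finitely many intersection points $P$ lying above $t$; over $D$ the torsion of $M$ is concentrated at $t=0$, where all these points coalesce at the origin, giving $\dim_\mathbb{C} M\otimes k(0)=i(\tilde{X}_0^{i},\tilde{X}_0^{j},0)$. Hence $i(\tilde{X}_t^{i},\tilde{X}_t^{j},\sigma(t))\leq\sum_{P}i(\tilde{X}_t^{i},\tilde{X}_t^{j},P)\leq i(\tilde{X}_0^{i},\tilde{X}_0^{j},0)$, the possible drop being exactly the intersection that escapes from $\sigma(0)$ to nearby crossings of the branches for $t\neq 0$. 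I expect the main obstacle to lie not in this final semicontinuity but in the two structural points that make it legitimate: that one projection can be taken generic for all fibres at once (the content of Theorem \ref{thgenericproj}) and that the branches can be tracked so that the labels $i,j$ denote matching branches for every $t$ (resting on the normalization in family); one must also verify that the projected branch surfaces are reduced and that their pairwise intersection germs stay concentrated on $\sigma(T)$ near the origin, so that $M$ is genuinely finite over $\mathbb{C}\{t\}$.
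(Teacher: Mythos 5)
Your proposal is correct and follows essentially the same route as the paper: fix one projection that is $C_5$-generic for every fibre via Theorem \ref{thgenericproj}, use topological triviality (constant number of branches, normalization in family) to match the branches of the fibres with irreducible surface components and their reduced plane images $V(F_i)$, and reduce to semicontinuity of the pairwise intersection numbers at the section. The only difference is cosmetic: where the paper invokes the conservation of intersection multiplicity under deformation (\cite[Prop.~3.14]{greuel16}) to write $i(f_i,f_j)=\sum_{x\in \tilde{X}_t^i\cap\tilde{X}_t^j}i(\tilde{X}_t^i,\tilde{X}_t^j,x)$ and then discards the points away from $\tilde{\sigma}(t)$, you re-derive the needed inequality directly via upper semicontinuity of the fibre dimension of the finite $\mathbb{C}\{t\}$-module $\mathbb{C}\{X,Y,t\}/(F,G)$, which is the same underlying lemma.
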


\begin{proof} Let $\tilde{\pi}:\mathbb{C}^n \times \mathbb{C}\rightarrow \mathbb{C}^2 \times \mathbb{C}$ be a projection defined by $\tilde{\pi}(x,t)=(\pi(x),t)$, such that $\pi|_{X_t}: X_t \rightarrow \mathbb{C}^2$ is $C_5$-generic for $X_t$, for all $t\in T$; such a generic projection exists by Theorem \ref{thgenericproj}. Set $\tilde{X}:=\tilde{\pi}(X)$ and consider the Fitting structure for $\tilde{X}$ (see \cite[p. 48]{greuel16}). The map $\tilde{\pi}$ restricted to $X$ is generically one-to-one over its image $\tilde{X}$, however embedded components can appear in $\tilde{X}$ and it can be not reduced. So, we consider the reduced structure for $\tilde{X}$, denoted by $| \tilde{X} |$.

We can see the projection $\tilde{p}:|\tilde{X}|\rightarrow T$ as a family of reduced curves with fibers $\tilde{X}_t:= \tilde{p}^{-1}(t)$, where $\tilde{p}$ is the restriction to $| \tilde{X} |$ of the composition $p \circ \iota$ where $\iota$ is the inclusion of $\mathbb{C}^3$ in $\mathbb{C}^{n+1}$. We can also consider a section $\tilde{\sigma}:T \rightarrow |\tilde{X}|$ defined as $\tilde{\sigma}:= \pi \circ \sigma$. Note that each fiber $\tilde{p}^{-1}(t)$ is the image of a $C_5$-generic projection of $X_t$. Therefore, we have a family of $C_5$-generic projections. 

Let $X=X^1\cup \cdots \cup X^r$ be a decomposition into irreducible components of $X$ and let $p_i:X^i\rightarrow T$ be the restriction of $p$ to $X^i$. Set $X_t^i:={(p^i)}^{-1}(t)$. The hypothesis that $\pi$ is a $C_5$-generic projection for $X_t$ implies that:\\

(1) The family $|\tilde{X}|$ has a decomposition $|\tilde{X}|=|\tilde{X}^1| \cup \cdots \cup |\tilde{X}^r|$ into irreducible components such that $\tilde{X}^i$ is the image of $X^i$ by $\tilde{\pi}$. So, let $\tilde{p}^i:|\tilde{X}^i| \rightarrow T$ be the restriction of $\tilde{p}$ to $|\tilde{X}^i|$. Set $\tilde{X}_t^i:={(\tilde{p}^i)}^{-1}(t)$.

(2) The number of irreducible components of the fibres $X_t^i$ and $\tilde{X}_t^i$ is the same.\\

Since $X$ is topologically trivial, then each $X^i$ is also topologically trivial. 
Hence, by (2) the fibre $\tilde{X}_t^i$ is irreducible for all $t\in T$. So there are analytic irreducible functions $F_1,\cdots,F_r \in \mathbb{C}\lbrace x,y,t \rbrace$ such that $|\tilde{X}^i|=V(F_i)$, and for all $t_0 \in T$, $\tilde{X}^i_{t_0}=V(F_i(x,y,t_0))$ and $F_i(x,y,t_0) \in \mathbb{C}\lbrace x,y \rbrace$ is analytically irreducible.

Set $f_i=F_i(x,y,0)$. Since each $f_i$ is irreducible and $\tilde{X}_0$ is reduced, all $f_1,\cdots,f_r$ are distinct. In particular, $f_1,\cdots,f_r$ does not have any common factor. Note that we can see the function $F_i$ as an unfolding of $f$, thus by \cite[Prop. $3.14$]{greuel16} (see also \cite[Th. $6.4.1$]{jong}) we have that:

\begin{equation}\label{eqproof}
i(f_i,f_j)=\displaystyle { \sum_{x \in \tilde{X}_t^i \cap  \tilde{X}_t^j}^{}}i(\tilde{X}^{i}_t,\tilde{X}^{j}_t,x)
\end{equation}

\noindent where $i(f_i,f_j)$ denotes the intersection multiplicity of the functions $f_i$ and $f_j$. Note that $i(\tilde{X}_0^i, \tilde{X}_0^j,0)=i(f_i,f_j)$ and for each $t\in T$, $\tilde{\sigma}(t) \in \tilde{X}_t^i \cap \tilde{X}_t^j$. Thus, 

\begin{center}
$ \ \ \ \ \ \ \  s(X_0,0)= \displaystyle { \sum_{i<j}^{}}i(\tilde{X}^{i}_0,\tilde{X}^{j}_0,0) \overset{eq(1)}{=} \displaystyle { \sum_{i<j}^{}} \left( \displaystyle { \sum_{x \in \tilde{X}_t^i \cap \tilde{X}_t^j}^{}}i(\tilde{X}^{i}_t,\tilde{X}^{j}_t,x) \right) \geq \displaystyle { \sum_{i<j}^{}} i(\tilde{X}^{i}_t,\tilde{X}^{j}_t,\tilde{\sigma}(t))=s(X_t,\sigma(t)). \ \ \ \ \ \ \ \ \  \qedhere $
\end{center}
\end{proof}

\begin{remark} In general, the invariant $s$ may not be upper semi-continuous. For instance, let $(X,0) \subset (\mathbb{C}^3,0)$ be the surface defined by the zeros of $f(x,y,t)=y\cdot(x^2+y^3+t^2y^2)$. Consider the family $p:(X,0) \rightarrow (\mathbb{C},0)$ where $p$ is the restriction of the canonical projection from $\mathbb{C}^3$ to the last factor. Take a good representative $p:X\rightarrow T$ and consider the section $\sigma:T\rightarrow X$ defined by $\sigma(t)=(0,0,t)$. We have that $X_t \setminus \sigma(t)$ is smooth for all $t \in T$, but $s(X_0,0)=2$ and $s(X_t,\sigma(t))=3$ for $t\neq 0$.\end{remark}

Let $(X,0)$ be a family of generically reduced curves, the following theorem gives us sufficient conditions for the tangent cones $C(X_0,0)$ and $C(X_t,\sigma(t))$ to be homeomorphic.

\begin{theorem}\label{teoprincipal} Let $p:(X,0) \subset (\mathbb{C}^{n+1},0) \rightarrow (\mathbb{C},0)$ be a family of generically reduced curves with $(X,0)$ equidimensional and suppose there is a good representative $p:X\rightarrow T$ with a section $\sigma:T \rightarrow X$, such that both $\sigma(T)$ and $X_t \setminus \sigma(t)$ are smooth for $t \in T$. \textit{If $\mu(X_t,\sigma(t))$, $m(X_t,\sigma(t))$ and $s(X_t,\sigma(t))$ are constant for all $t \in T$, then the tangent cones $C(X_0,\sigma(0))$ and $C(X_t,\sigma(t))$ are homeomorphic. That is, the number of tangents of $(X_t,\sigma(t))$ is constant along $\sigma(T)$.}
\end{theorem}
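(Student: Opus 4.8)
The plan is to reduce everything to the plane via a common generic projection and then to show that the partition of the branches according to their tangent direction does not depend on $t$. First I would observe that every fibre $X_t$ is a curve germ all of whose branches pass through $\sigma(t)$, so $X_t$ is connected; since $(X,0)$ is reduced and equidimensional, Theorem \ref{teotoptrivial}(a) then shows that the hypothesis ``$\mu(X_t,\sigma(t))$ constant'' already forces $X$ to be topologically trivial, and in particular the number $r$ of branches to be constant. If $r=1$ the fibres are irreducible with a single tangent and there is nothing to prove, so I assume $r\geq 2$. By Theorem \ref{thgenericproj} I fix a common $C_5$-generic projection $\tilde\pi(x,t)=(\pi(x),t)$ and pass to the family of plane curves $\tilde X_t$ with branches $\tilde X^1_t,\dots,\tilde X^r_t$, exactly as in the proof of Proposition \ref{propprinc2}; as a generic projection preserves the number of tangents, it suffices to prove that $\tilde X_t$ has a constant number of tangents.

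The engine of the argument is one elementary principle: a family of nonnegative quantities that is termwise upper semicontinuous and has constant total sum must be termwise constant. I would apply it twice. First to the branch multiplicities $m_i(t):=m(\tilde X^i_t,\sigma(t))=m(X^i_t,\sigma(t))$, which are upper semicontinuous (the multiplicity of the special fibre dominates that of the nearby fibres) and whose sum is $m(X_t,\sigma(t))$, constant by hypothesis; hence each $m_i(t)$ is constant, say equal to $m_i$. Second to the pairwise intersection numbers $i(\tilde X^i_t,\tilde X^j_t,\sigma(t))$: equation (\ref{eqproof}) in the proof of Proposition \ref{propprinc2} gives, for every pair,
\[
i(\tilde X^i_0,\tilde X^j_0,0)=\sum_{x\in \tilde X^i_t\cap\tilde X^j_t} i(\tilde X^i_t,\tilde X^j_t,x)\ \geq\ i(\tilde X^i_t,\tilde X^j_t,\sigma(t)),
\]
since the terms are nonnegative and $\sigma(t)$ is one of the intersection points. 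Summing over $i<j$ reproduces the inequality $s(X_0,0)\geq s(X_t,\sigma(t))$ of Proposition \ref{propprinc2}; as $s$ is constant, equality holds, and therefore each $i(\tilde X^i_t,\tilde X^j_t,\sigma(t))$ is constant in $t$.

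Now I would read off the tangency relation from these two constant families of integers. By Proposition \ref{propprinc}(b), the branches $\tilde X^i_t$ and $\tilde X^j_t$ share the same tangent line if and only if $i(\tilde X^i_t,\tilde X^j_t,\sigma(t))>m_i m_j$, and since both sides are independent of $t$, the truth of this inequality is independent of $t$ for every pair. The relation ``having the same tangent line'' is an equivalence relation on the set of branches, and its classes are exactly the distinct tangent directions; thus the number of tangents of $\tilde X_t$ equals the number of equivalence classes, which does not depend on $t$. Consequently the number of tangents of $X_t$ is constant along $\sigma(T)$, and since set-theoretically the tangent cone of a curve is a finite union of lines, one per tangent, the remark in the Introduction yields that $C(X_0,\sigma(0))$ and $C(X_t,\sigma(t))$ are homeomorphic.

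I expect the main obstacle to be precisely the passage from constancy of the single invariant $s$ to constancy of the individual intersection numbers $i(\tilde X^i_t,\tilde X^j_t,\sigma(t))$, and likewise from constancy of the total multiplicity to constancy of the branch multiplicities $m_i$; both rest on the ``upper semicontinuity plus constant sum'' observation, the first instance relying crucially on the specialization formula (\ref{eqproof}), which itself needs the common generic projection of Theorem \ref{thgenericproj} together with the topological triviality of each component $X^i$ (so that the fibres $\tilde X^i_t$ remain irreducible). A secondary point to handle with care is the identity $m_i=m(\tilde X^i_t)$, which uses that a generic projection preserves the multiplicity of each branch, as in Proposition \ref{propprinc}(a).
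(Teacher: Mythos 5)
Your proposal is, in substance, the paper's own proof: both arguments reduce to a common $C_5$-generic plane projection via Theorem \ref{thgenericproj}, both apply the ``upper semicontinuity plus constant sum'' principle twice --- once to the branch multiplicities $m(X^i_t,\sigma(t))$, whose sum is the constant $m(X_t,\sigma(t))$, and once to the pairwise quantities, where your individual intersection numbers $i(\tilde{X}^i_t,\tilde{X}^j_t,\sigma(t))$ are exactly the paper's pairwise invariants $s(X^i_t\cup X^j_t,\sigma(t))$, made upper semicontinuous by equation (\ref{eqproof}) and Proposition \ref{propprinc2} applied to the topologically trivial pair families $X^i\cup X^j$ --- and both conclude by the tangency criterion of Proposition \ref{propprinc}(b). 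The only difference is presentational: you deduce directly that the tangency status of each pair of branches is independent of $t$ and count equivalence classes of the ``same tangent'' relation, whereas the paper supposes the number of tangents jumps and derives a contradiction in the two possible situations; these are contrapositives of one another.

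There is one genuine misstep to repair, at the very first step. Your claim that $X_t$ is connected ``since all branches pass through $\sigma(t)$'' is unjustified: in a flat family whose fibres have their only possible singularity at $\sigma(t)$, branches of a nearby fibre may lie entirely off the section (two skew lines in $\mathbb{C}^3$ specializing to two concurrent lines give a family with disconnected nearby fibres), which is why connectedness appears as a separate hypothesis in Theorem \ref{teotoptrivial}(a). This is precisely the point where the constancy of $m(X_t,\sigma(t))$ enters first: the paper invokes \cite[Lemma 4.6]{otoniel} to deduce connectedness of the fibres from the constancy of the multiplicity, and only then obtains topological triviality from $\mu$-constancy as you do. Substituting that citation for your connectedness remark, your argument is complete and coincides with the paper's.
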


\begin{proof} We have that $m(X_t,\sigma(t))$ is constant, then the fiber $X_t$ is connected for all $t \in T$ by \cite[Lemma 4.6]{otoniel}. By Theorem \ref{teotoptrivial} we have that $X$ is topologically trivial. Suppose $X=X^{1}\cup X^{2} \cup \cdots \cup X^{r}$ is the decomposition of $X$ into irreducible components, which are all, of dimension two and $X^j \cap X^i = \sigma(T)$ for all $i\neq j$.
 
Since $X$ is topologically trivial, the restriction of $p$ to any component $X^{i}$ and to any finite union of components induces a topologically trivial family of generically reduced curves. We define then, for each pair $X^{i}$ and $X^{j}$, the family $p_{ij}:= p|_{X^{i}\cup X^{j}} : X^{i}\cup X^{j}\rightarrow T$, which is topologically trivial. Furthermore, since the multiplicity of the fibers is
upper  semi-continuous, the multiplicity of $X_t^{i}$ at $\sigma(t)$ is constant for every $i$. On the other hand, since $X$ is topologically trivial,  the semi-continuity property of the $s$-invariant, (Prop. \ref{propprinc2}), implies that $s(X_t,\sigma(t))$ is constant if and only if $s(X_t^{i}\cup X_t^{j},\sigma(t))$ is constant for each pair $(i,j)$, with $i<j$.

Suppose now that the number of tangents of $(X_t,\sigma(t))$ is not constant. Then, for a sufficiently small representative $X$ of $(X,0)$ there are two irreducible components $X^{i}$ and $X^{j}$ of $X$ for which one of the following situations holds:\\

$\bullet$ \textit{Situation (a):} $X_t^{i} \cup X_t^{j}$ has two tangents for $t=0$ and only one tangent for $t \neq 0$. In this case, by Proposition \ref{propprinc} we have that:

\begin{center}
$s(X_0^{i}\cup X_0^{j},0)= m(X_0^{i}, 0)\cdot m(X_0^{j}, 0)$ $ \ \ \ $  and $ \ \ \ $ $s(X_t^{i}\cup X_t^{j},\sigma(t))> m(X_t^{i},\sigma(t))\cdot m(X_t^{j},\sigma(t))$ for $t\neq 0$. 
\end{center}

\noindent Since the multiplicities of $X_t^{i}$ and $X_t^{j}$ at $\sigma(t)$ are constant, it follows that $s(X_0^{i}\cup X_0^{j},0)<s(X_t^{i}\cup X_t^{j},\sigma(t))$, which is a contradiction.\\

$\bullet$ \textit{Situation (b):} $X_t^{i} \cup X_t^{j}$ has one tangent for $t=0$ and two tangents for $t \neq 0$ at $\sigma(t)$. Again, by Proposition \ref{propprinc} we have that

\begin{center}
$s(X_0^{i}\cup X_0^{j},0)> m(X_0^{i}, 0)\cdot m(X_0^{j}, 0)$ $ \ \ \ $  and $ \ \ \ $ $s(X_t^{i}\cup X_t^{j},\sigma(t))= m(X_t^{i},\sigma(t))\cdot m(X_t^{j},\sigma(t))$ for $t\neq 0$. 
\end{center}

\noindent Since the multiplicities of $X_t^{i}$ and $X_t^{j}$ at $\sigma(t)$ are constant, it follows that $s(X_0^{i}\cup X_0^{j},0)>s(X_t^{i}\cup X_t^{j},\sigma(t))$, which is again a contradiction.\end{proof}

By Theorem \ref{teotoptrivial} we have another formulation of Theorem \ref{teoprincipal}:

\begin{corollary}\label{corprinc} Let $p:X\rightarrow T$ be as in Theorem \rm \ref{teoprincipal}.\\

\noindent \textit{(a) If $X$ is topologically trivial and $m(X_t,\sigma(t))$ and $s(X_t,\sigma(t))$ are constant for all $t \in T$, then $C(X_0,\sigma(0))$ and $C(X_t,\sigma(t))$ are homeomorphic.}\\

\noindent \textit{(b) If $X$ is Whitney equisingular and $s(X_t,\sigma(t))$ is constant for all $t \in T$, then $C(X_0,\sigma(0))$ and $C(X_t,\sigma(t))$ are homeomorphic.}

\end{corollary}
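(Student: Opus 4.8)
The plan is to derive both statements directly from Theorem \ref{teoprincipal}, using Theorem \ref{teotoptrivial} only as a device to recover the constancy of the invariants that are not assumed outright. Recall that Theorem \ref{teoprincipal} requires the simultaneous constancy of the three invariants $\mu(X_t,\sigma(t))$, $m(X_t,\sigma(t))$ and $s(X_t,\sigma(t))$ along $\sigma(T)$, and that its conclusion is precisely the desired homeomorphism of tangent cones. Each part of the corollary already supplies $s$ constant as a standing hypothesis, so the only work is to show that the remaining members of the triple are forced to be constant by the equisingularity assumption in play.

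For part (a), I would begin from the assumption that $X$ is topologically trivial. By Theorem \ref{teotoptrivial}(a), topological triviality is equivalent (among the listed conditions) to the constancy of $\mu(X_t,\sigma(t))$ together with connectedness of the fibers; in particular $\mu$ is automatically constant along $\sigma(T)$. Combining this with the two hypotheses that $m(X_t,\sigma(t))$ and $s(X_t,\sigma(t))$ are constant, all three invariants $\mu$, $m$ and $s$ are constant, so Theorem \ref{teoprincipal} applies verbatim and yields that $C(X_0,\sigma(0))$ and $C(X_t,\sigma(t))$ are homeomorphic.

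For part (b), I would instead invoke Theorem \ref{teotoptrivial}(b): Whitney equisingularity is equivalent to the \emph{simultaneous} constancy of $\mu(X_t,\sigma(t))$ and $m(X_t,\sigma(t))$. Hence both $\mu$ and $m$ come for free from the Whitney hypothesis, and adjoining the remaining hypothesis that $s(X_t,\sigma(t))$ is constant again places us exactly in the situation of Theorem \ref{teoprincipal}, giving the homeomorphism of the tangent cones.

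There is no genuine obstacle in this argument: the substantive content lives entirely in Theorem \ref{teoprincipal} and in the equisingularity criteria of Theorem \ref{teotoptrivial}, and the proof is simply the bookkeeping that matches each set of hypotheses to the triple $(\mu,m,s)$ demanded by Theorem \ref{teoprincipal}. The one point worth emphasizing, to rule out any hidden circularity, is that the constancy of $s$ is an independent assumption in both parts and is \emph{not} implied by topological triviality or by Whitney equisingularity; indeed Example \ref{exemploprinc2} exhibits a Whitney equisingular family in which the number of tangents, and hence $s$, changes, so the hypothesis on $s$ genuinely cannot be dropped and must be carried through to the application of Theorem \ref{teoprincipal}.
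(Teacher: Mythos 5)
Your proposal is correct and coincides with the paper's own (implicit) argument: the paper simply observes that the corollary is a reformulation of Theorem \ref{teoprincipal} via Theorem \ref{teotoptrivial}, which is exactly your bookkeeping of recovering $\mu$ constant in part (a) from Theorem \ref{teotoptrivial}(a) and both $\mu$ and $m$ constant in part (b) from Theorem \ref{teotoptrivial}(b). Your closing remark that the constancy of $s$ is genuinely independent (witnessed by Example \ref{exemploprinc2}) is accurate and consistent with the paper.
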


\begin{corollary}\label{cor1} Let $p:X\rightarrow T$ be as in Theorem \rm \ref{teoprincipal}. \textit{Suppose that $X$ is an equisaturated family of reduced curves, then the tangent cones $C(X_0,0)$ and $C(X_t,\sigma(t))$ are homeomorphic.}
\end{corollary}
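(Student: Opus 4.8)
The plan is to deduce this statement directly from Corollary \ref{corprinc}(a), so the whole task reduces to checking that the three ingredients of that corollary---topological triviality, constancy of $m(X_t,\sigma(t))$, and constancy of $s(X_t,\sigma(t))$---are all guaranteed by equisaturation. The starting point is Theorem \ref{teotoptrivial}(c): for a family of reduced curves, equisaturation is equivalent to the conjunction of topological triviality and the independence of $t$ of the embedded topological type of a generic projection $\tilde{X}_t$ of $X_t$ to $\mathbb{C}^2$. Thus equisaturation gives topological triviality for free, which is the first hypothesis of Corollary \ref{corprinc}(a), and it also hands us constancy of the topological type of $\tilde{X}_t$, which I will use to control the remaining two invariants.

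First I would read off the multiplicity. As recalled in the proof of Proposition \ref{propprinc} (via \cite[Theorem VI.1.6]{pham}), a $C_5$-generic projection preserves multiplicity, so $m(X_t,\sigma(t))=m(\tilde{X}_t,\tilde{\sigma}(t))$. Since the multiplicity of a plane curve germ is an invariant of its embedded topological type, and that type is constant in $t$ by Theorem \ref{teotoptrivial}(c), it follows that $m(X_t,\sigma(t))$ is constant along $\sigma(T)$.

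Next I would treat the $s$-invariant. By Definition \ref{defs}, $s(X_t,\sigma(t))$ is nothing but the sum $\sum_{i<j} i(\tilde{X}_t^{i},\tilde{X}_t^{j},\tilde{\sigma}(t))$ of pairwise intersection multiplicities of the branches of the plane curve $\tilde{X}_t$. The pairwise intersection multiplicities of the branches of a plane curve germ are determined by its embedded topological type---this is exactly the fact already invoked just after Definition \ref{defs} to argue that $s$ is well defined (two generic projections of a reduced curve are topologically equivalent and hence have the same branch intersection multiplicities). Since Theorem \ref{teotoptrivial}(c) tells us the topological type of $\tilde{X}_t$ does not vary with $t$, the quantity $s(X_t,\sigma(t))$ is constant along $\sigma(T)$ as well. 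With topological triviality and constancy of both $m$ and $s$ in hand, Corollary \ref{corprinc}(a) immediately gives that $C(X_0,0)$ and $C(X_t,\sigma(t))$ are homeomorphic.

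The proof is therefore mostly a matter of bookkeeping once Theorem \ref{teotoptrivial}(c) is available; the only point that deserves care---and the place I would be most careful---is the clean statement that multiplicity and pairwise branch intersection multiplicity of a plane curve germ are genuinely invariants of the embedded topological type, together with the correct identification of the sections $\tilde{\sigma}(t)=\pi\circ\sigma(t)$ so that the invariants of $X_t$ and of $\tilde{X}_t$ are compared at the corresponding singular points. These are classical facts for plane curves, so I do not expect a real obstacle, only the need to cite them precisely.
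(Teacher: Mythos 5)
Your proof is correct, and it follows a genuinely different route from the paper's, although both rest on the same key input, namely \cite[Th\'eor\`eme IV.8]{briancon} (restated as Theorem \ref{teotoptrivial}(c)). The paper's own proof projects the whole family at once by a common $C_5$-generic projection $\pi\times id$ (available by Theorem \ref{thgenericproj}), invokes Th\'eor\`eme IV.8 to get topological triviality of both $X$ and $\tilde{X}$, hence constancy of $\mu(X_t,\sigma(t))$ and $\mu(\tilde{X}_t,\sigma(t))$, obtains constancy of $m(X_t,\sigma(t))$ from \cite[Corollaire IV.9]{briancon}, and then extracts constancy of $s$ \emph{numerically}: since the plane family $\tilde{p}$ is $\mu$-constant, each branchwise Milnor number $\mu(\tilde{X}^i_t,\sigma(t))$ is constant, and Hironaka's formula $\mu(\tilde{X}_t,\sigma(t))=\sum_i\mu(\tilde{X}^i_t,\sigma(t))+2s(X_t,\sigma(t))-r+1$ (\cite[Prop. 4]{hiro}) then forces $s(X_t,\sigma(t))$ to be constant; the conclusion comes from Theorem \ref{teoprincipal}. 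You instead read both invariants \emph{topologically} off the constant embedded topological type of the generic plane projection furnished by Theorem \ref{teotoptrivial}(c): multiplicity is preserved by $C_5$-generic projection (\cite[Theorem VI.1.6]{pham}, as in the proof of Proposition \ref{propprinc}) and is a topological invariant of plane curve germs, and the pairwise branch intersection multiplicities defining $s$ are topological invariants as well --- exactly the fact the paper already uses after Definition \ref{defs} to show $s$ is well defined --- after which Corollary \ref{corprinc}(a) finishes the argument. What each approach buys: yours avoids Hironaka's formula and Corollaire IV.9, and in fact does not need a single projection that is simultaneously generic for all $t$, since $m$ and $s$ are independent of the choice of generic projection for each fiber separately; the paper's version keeps the bookkeeping within numerical identities and references it deploys elsewhere (e.g. the same Hironaka computation reappears in Proposition \ref{prop1}(c)). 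The one place where your write-up should cite carefully is the claim that the embedded topological type of a plane curve germ determines its multiplicity and its pairwise branch intersection numbers: for plane curves this is classical (Zariski; see also \cite{greuel16}), but it is worth noting that the analogous multiplicity statement is precisely Zariski's open question in higher dimension, so the restriction to plane projections is essential --- which your argument correctly respects.
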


\begin{proof}

Consider the projection $\pi \times id: (\mathbb{C}^n \times \mathbb{C},0)\rightarrow (\mathbb{C}^2 \times \mathbb{C},0)$ defined by $\pi \times id (x,t):=(\pi(x),t)$, where $\pi$ is a $C_5$-generic projection for $X_t$ for all $t$. Denote by $(\tilde{X},0)$ the image of $(X,0)$ by $\pi \times id$, thus $\tilde{p}:(\tilde{X},0)\rightarrow (\mathbb{C},0)$ is a family of plane curves with fiber $\tilde{p}^{-1}(t):=\tilde{X}_t$. Since $(X,0)$ is equisaturated, we have by \cite[Théorème IV.8]{briancon} that $(X,0)$ and $(\tilde{X},0)$ are topologically trivial and hence $\mu(X_t,\sigma(t))$ and $\mu(\tilde{X}_t,\sigma(t))$ are constant. By \cite[Corollaire IV.9]{briancon} we have that $m(X_t,\sigma(t))$ is constant. Consider $(\tilde{X},0)=(\tilde{X}^{1}\cup \cdots \cup \tilde{X}^{r},0)$ the decomposition of $(\tilde{X},0)$ into irreducible components and denote by $(\tilde{X}_t^i,\sigma(t))$ the fiber of $\tilde{p}^{-1}(t)$ restricted to $(\tilde{X}^i,\sigma(t))$. Since $\mu(\tilde{X}_t,\sigma(t))$ is constant and $\tilde{p}:(\tilde{X},0)\rightarrow (\mathbb{C},0)$ is a family of plane curves, $\mu(\tilde{X}_t^i,\sigma(t))$ is constant for all $i=1,\cdots, r$. By Hironaka's formula (\cite[Prop. 4]{hiro}) we have that

\begin{center}
$\mu(\tilde{X}_t,\sigma(t))=\mu(\tilde{X}_t^1,\sigma(t))+\cdots+\mu(\tilde{X}_t^r,\sigma(t))+2s(X_t,\sigma(t))-r+1$,
\end{center} 

\noindent hence, $s(X_t,\sigma(t))$ is constant. The statement now follows by Theorem \ref{teoprincipal}.\end{proof}

\begin{remark} By Corollary \rm\ref{corequibili}, \textit{we can also see Corollary} \rm\ref{cor1} \textit{as a particular case of a result by Sampaio (see} \cite[\textit{Th. $2.2$}]{edson}\textit{).} 
\end{remark}

\begin{corollary} Let $p:X\rightarrow T$ be as in Theorem \rm \ref{teoprincipal}. \textit{If $X$ is a topologically trivial family of reduced plane curves, then the tangent cones $C(X_0,0)$ and $C(X_t,\sigma(t))$ are homeomorphic.}
\end{corollary}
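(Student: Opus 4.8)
The plan is to reduce the statement to Corollary \ref{corprinc}(a): since $X$ is already assumed to be topologically trivial, it suffices to prove that both $m(X_t,\sigma(t))$ and $s(X_t,\sigma(t))$ are constant along $\sigma(T)$. Because the fibers are plane curves, the total space $X$ lies in $\mathbb{C}^3$ and a $C_5$-generic projection may be taken to be the identity, so $s(X_t,\sigma(t))$ is simply the sum of the pairwise intersection multiplicities of the branches of $X_t$ at $\sigma(t)$ (and, as usual, the case of an irreducible fiber is trivial since then there is a single tangent). First I would record that topological triviality, via Theorem \ref{teotoptrivial}(a), forces $\mu(X_t,\sigma(t))$, $\delta(X_t,\sigma(t))$ and the number of branches $r=r(X_t,\sigma(t))$ to be constant.

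The constancy of $s$ I would obtain by mimicking the proof of Corollary \ref{cor1}. Writing $X=X^{1}\cup\cdots\cup X^{r}$ for the decomposition into irreducible components, the restriction of $p$ to each $X^{i}$ (and to each finite union of components) is again a topologically trivial family of reduced plane curves, so by Theorem \ref{teotoptrivial}(a) every $\mu(X_t^{i},\sigma(t))$ is constant. Plugging this into Hironaka's formula $\mu(X_t,\sigma(t))=\mu(X_t^{1},\sigma(t))+\cdots+\mu(X_t^{r},\sigma(t))+2s(X_t,\sigma(t))-r+1$ (see \cite[Prop. 4]{hiro}), and using that $\mu(X_t,\sigma(t))$, each $\mu(X_t^{i},\sigma(t))$ and $r$ are constant, one solves for $s(X_t,\sigma(t))$ and concludes that it is constant as well.

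For the constancy of $m$ I would invoke the classical fact that the multiplicity of a reduced plane curve germ is an invariant of its embedded topological type: the decomposition into branches and the Puiseux characteristic of each branch are topological, and the multiplicity is the sum over the branches of their first Puiseux exponents. Since topological triviality of $X$ means that $(X_0,0)$ and $(X_t,\sigma(t))$ are embedded-topologically equivalent for every $t$, this yields $m(X_t,\sigma(t))=m(X_0,0)$ for all $t$. Having shown that both $m$ and $s$ are constant, Corollary \ref{corprinc}(a) then gives the homeomorphism of the tangent cones $C(X_0,0)$ and $C(X_t,\sigma(t))$.

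I expect the only genuinely delicate point to be the topological invariance of the multiplicity for plane curves; everything else is a direct transcription of the argument already carried out for equisaturated families in Corollary \ref{cor1}, with the simplification that no auxiliary projection $\pi\times\mathrm{id}$ is needed because the fibers are already planar. One should also check that the reducedness of the fibers $X_t$ (so that the branch-wise definition of $s$ applies) is preserved throughout the family, which again follows from topological triviality.
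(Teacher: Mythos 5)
Your argument is correct, but it follows a genuinely different route from the paper's proof, which is a one-line reduction: a topologically trivial family of reduced plane curves is equisaturated (Theorem \ref{teotoptrivial}(c), with the identity serving as the common $C_5$-generic projection and the constancy of the embedded topological type of the fibers supplied by \cite[Th. 9.3]{greuel}), after which Corollary \ref{cor1} applies verbatim. You bypass equisaturation entirely and verify the hypotheses of Corollary \ref{corprinc}(a) by hand. Your Hironaka-formula computation of $s$ is exactly the computation performed inside the proof of Corollary \ref{cor1}, only with $\pi=\mathrm{id}$; for it to compute the full $s$-invariant you implicitly need each fiber $X^i_t$ of an irreducible component of the total space to be irreducible, so that components of $X$ correspond to branches of every fiber and the pairwise intersection numbers in the formula really add up to $s(X_t,\sigma(t))$ --- this is fine, since the restricted families $p|_{X^i}$ are topologically trivial (as asserted in the proofs of Proposition \ref{propprinc2} and Theorem \ref{teoprincipal}) and normalization in family \cite[Theorem 4.4]{jawad}, recalled at the beginning of Section \ref{sec3}, then yields irreducible fibers; the paper glosses the same point in Corollary \ref{cor1}, but you should make it explicit. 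The genuine divergence is the constancy of $m$: where the paper obtains it from equisaturation via \cite[Corollaire IV.9]{briancon}, you appeal to the topological invariance of the multiplicity of a reduced plane curve germ (Puiseux characteristics and pairwise intersection multiplicities of branches being invariants of the embedded topological type). That fact is classical and correct, but note that Definition \ref{deftoptrivial}(a) only provides an intrinsic homeomorphism $h:X\to X_0\times T$, not an ambient one, so the constant \emph{embedded} topological type of the plane fibers must be extracted from $\mu$-constancy (Theorem \ref{teotoptrivial}(a)) together with the classical theory of equisingularity of plane curves, or from \cite[Th. 9.3]{greuel} as invoked in the introduction --- the same input hidden in the paper's one-liner. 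In exchange for being longer, your proof stays entirely within the invariants $\mu$, $m$, $s$ developed in Section \ref{sec3} and makes visible exactly which numerical quantities are held constant, whereas the paper's proof is shorter but leans twice on the Briançon--Galligo--Granger saturation machinery (in Theorem \ref{teotoptrivial}(c) and inside Corollary \ref{cor1}).
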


\begin{proof}\let\qed\relax

A topologically trivial family of reduced plane curves is equisaturated, thus the result follows by Corollary \ref{cor1}.\end{proof}

\subsection{Examples and Counterexamples}\label{sec5}

$ \ \ \  $ The purpose of this section is to provide counterexamples for natural questions about the results of the previous sections. First, we present  Proposition \ref{prop1} which will be very useful in the presentation of the examples. 
  
\begin{definition}
Let $p_i:(X^i,0)\rightarrow (\mathbb{C},0)$; $i=1,2$, be a family of generically reduced curves in $(\mathbb{C}^n \times \mathbb{C},0)$, the surface $(X^i,0)$ being defined by the ideal $\mathcal{I}^i \in \mathcal{O}_{n+1}$. Suppose that there is a good representative $p_i:X^i\rightarrow T$ with a section $\sigma:T \rightarrow X^i$, such that $\sigma(T)$ is smooth and $X^i_t \setminus \sigma(t)$ is smooth for all $t \in T$. We say that $(X^1,0)$ and $(X^2,0)$ intersect well if $\sqrt{(\mathcal{I}^1+\mathcal{I}^2)}=(x_1,\cdots,x_n)$ in $\mathcal{O}_{n+1}$, {\it i.e}, in a good representative $Sing(X) = X^1 \cap X^2= \sigma(T) $. 

\end{definition}

Now we want to be able to study equisingularity in family by cutting into irreducible components of the deformation space.

\begin{proposition}\label{prop1}
For $i=1, \cdots, r$, let $p_i:(X^i,0)\rightarrow (\mathbb{C},0)$ be a family of generically reduced curves 
in $(\mathbb{C}^n \times \mathbb{C},0)$, where $(X^i,0)$ is equidimensional and defined by the ideal $\mathcal{I}^i \in \mathcal{O}_{n+1}$. Suppose that $(X^i,0)$ and $(X^j,0)$ intersect well for all $i\neq j$. Let $p: (X,0) :=$ $\displaystyle { \bigcup_{i=1}^{r}}(X^i,0) \rightarrow (\mathbb{C},0)$ whose restriction to each $(X^i,0)$ is $p_i$. Then:\\

\noindent (a) $X$ is topologically trivial if and only if every $X^i$ is topologically trivial. 

\noindent (b) $X$ is Whitney equisingular if and only if every $X^i$ is Whitney equisingular.

\noindent (c) In addition, suppose that $X$ and every $X^{i}$ are Cohen-Macaulay. Then, $X$ is equisaturated if and only if every $X^{i}$ is equisaturated and $s(X_t,\sigma(t))$ is constant.\end{proposition}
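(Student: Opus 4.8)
The plan is to reduce each of the three equivalences to the numerical criteria already at our disposal — Theorem \ref{teotoptrivial} for (a) and (b), and Corollary \ref{corequisaturated} together with the common generic projection of Theorem \ref{thgenericproj} for (c) — and then to exploit the fact that the relevant invariants ($r$, $\delta$, $\mu$, $m$ and $s$) are \emph{additive over the components} $X^i$ up to explicit inter-component intersection contributions. The hypothesis that the $X^i$ intersect well is what controls those contributions: since in a good representative $X^i\cap X^j=\sigma(T)$, no intersection mass sits off the section, and this is exactly what will force the correction terms to be constant rather than merely semicontinuous.

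For (a) I would start from Theorem \ref{teotoptrivial}(a): $X$ (resp. each $X^i$) is topologically trivial iff $\delta$ and $r$ are constant along $\sigma(T)$. Since the $X^i$ share no branch (they meet only along $\sigma(T)$), the branches of $X_t$ are the disjoint union of the branches of the $X^i_t$, so $r(X_t,\sigma(t))=\sum_i r(X^i_t,\sigma(t))$. For $\delta$ I would use the Mayer--Vietoris sequence
\[ 0\to \mathcal O_n\big/\textstyle\bigcap_i\mathcal I^i_t\to \bigoplus_i \mathcal O_n/\mathcal I^i_t\to \mathcal Q_t\to 0 \]
and its inductive refinement to get $\delta(X_t,\sigma(t))=\sum_i\delta(X^i_t,\sigma(t))+I_t$, where $I_t\ge 0$ collects fibre intersection lengths $\dim_\C \mathcal O_n/(\mathcal I^i_t+\mathcal I^j_t)$. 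The decisive step is to show $I_t$ is constant, using that intersect well makes each intersection scheme $X^i\cap X^j$ supported on the smooth section, hence a flat family over $T$, so its fibre length cannot jump. Granting this, constancy of $\delta(X_t)$ and $r(X_t)$ is equivalent — by nonnegativity of the summands together with upper semicontinuity of $\delta$ (and of the $I_t$-terms) and semicontinuity of the number of branches — to constancy of every $\delta(X^i_t)$ and $r(X^i_t)$, proving both implications at once.

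Part (b) then follows quickly: multiplicity is additive over components by Serre's formula (as in the proof of Proposition \ref{propprinc}), so $m(X_t,\sigma(t))=\sum_i m(X^i_t,\sigma(t))$ and $m$ is upper semicontinuous; since Whitney equisingularity implies topological triviality, I invoke part (a) to transfer constancy of $\mu$ and use additivity and semicontinuity of $m$ to transfer constancy of $m$, concluding via Theorem \ref{teotoptrivial}(b). For (c) I would use Corollary \ref{corequisaturated}: equisaturation is equivalent to topological triviality of a common generic plane projection $\tilde X$, i.e.\ (connectedness being automatic) to constancy of $\mu(\tilde X_t,\sigma(t))$; the Cohen--Macaulay hypotheses are used precisely to guarantee that $\tilde X$ and each $\tilde X^i$ are flat families of reduced plane curves so that the plane-curve theory applies. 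The grouped form of Hironaka's formula (\cite[Prop. 4]{hiro}, as used in Corollary \ref{cor1}) gives
\[ \mu(\tilde X_t,\sigma(t))=\sum_i \mu(\tilde X^i_t,\sigma(t))+2\!\!\sum_{1\le i<j\le r}\!\! i(\tilde X^i_t,\tilde X^j_t,\sigma(t))-r+1, \]
while the $s$-invariant splits as $s(X_t,\sigma(t))=\sum_i s(X^i_t,\sigma(t))+\sum_{i<j} i(\tilde X^i_t,\tilde X^j_t,\sigma(t))$. Combining these two identities with constancy of the cross terms coming from intersect well, one reads off that $\mu(\tilde X_t)$ is constant iff every $\mu(\tilde X^i_t)$ is constant and $s(X_t,\sigma(t))$ is constant, which is the asserted equivalence.

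I expect the main obstacle to be the \emph{constancy}, and not merely the upper semicontinuity, of the inter-component contributions $I_t$ and $\sum_{i<j} i(\tilde X^i_t,\tilde X^j_t,\sigma(t))$. Proposition \ref{propprinc2} yields only semicontinuity of such quantities, and one can build topologically trivial components whose union fails to be topologically trivial precisely when extra intersection escapes the section as $t\to 0$; the whole content of intersect well is to forbid this. The delicate point is therefore to turn ``the surfaces meet only along the smooth section $\sigma(T)$'' into genuine flatness of the intersection schemes (hence constant fibre length) and into the statement that no intersection point of the projected fibres drifts into $\sigma(t)$. Making this rigorous — presumably via conservation of intersection number on a good representative — is where the real work lies.
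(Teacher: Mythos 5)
Your reduction of (c) to Corollary \ref{corequisaturated} plus the grouped Hironaka formula is indeed the paper's route, but the load-bearing claim in your parts (a) and (c) --- that ``intersect well'' forces the intersection schemes $X^i\cap X^j$ to be flat over $T$, hence the correction terms $I_t$ and the projected cross terms $\sum_{i<j} i(\tilde X^i_t,\tilde X^j_t,\sigma(t))$ to be \emph{constant} --- is false. Concretely, in $\C^3\times\C$ take $\mathcal I^1=\langle y,z\rangle$ and $\mathcal I^2=\langle y-x^2,\,z-tx\rangle$: both families are smooth and topologically trivial, $\mathcal I^1+\mathcal I^2=\langle y,z,x^2,tx\rangle$ has radical $\langle x,y,z\rangle$, so they intersect well; yet the fibrewise intersection length jumps from $1$ ($t\neq 0$) to $2$ ($t=0$), and $\mathcal O_{4}/(\mathcal I^1+\mathcal I^2)$ has $t$-torsion (the class of $x$), so the intersection scheme is \emph{not} flat. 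The paper's own Examples \ref{exemploprinc2} and \ref{exemploprinc3} exhibit the same phenomenon after projection: the components intersect well (in Example \ref{exemploprinc3} they are even equisaturated), yet $s$ jumps from $13$ to $9$; if intersect well implied constancy of the cross terms, the hypothesis ``$s(X_t,\sigma(t))$ constant'' in statement (c) would be vacuous and those examples could not exist. There is a second, related gap in your Mayer--Vietoris computation of $\delta$: the scheme fibre of the union is $V\bigl(\bigcap_i\mathcal I^i+(t)\bigr)$, which at $t=0$ is generally strictly smaller than the union of the fibres $V(\mathcal I^i_0)$ --- it acquires an embedded point, and the paper's bookkeeping $\delta(\mathscr{C},0)=\delta(|\mathscr{C}|,0)-\epsilon(\mathscr{C},0)$ (cf.\ Example \ref{exemplo2.1}) is exactly what absorbs the jump of $I_t$: in the example above $\delta(|X_0|,0)=2$ but $\epsilon(X_0,0)=1$, so the relevant $\delta(X_t,\sigma(t))$ is constant equal to $1$ even though your $I_t$ jumps. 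So the identity ``$\delta(X_t)=\sum_i\delta(X^i_t)+I_t$ with $I_t$ constant'' fails on both counts, and with it your proof of (a), on which your (b) also leans.

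The paper avoids all of this: since $X^i\cap X^j=\sigma(T)=\mathrm{Sing}(X)$, statements (a) and (b) are handled directly and topologically --- trivializing homeomorphisms $h_i\colon X^i\to X^i_0\times T$ glue along the common section, a trivialization of $X$ restricts to each component, and Whitney's conditions (a) and (b) are tested on sequences in $X\setminus\sigma(T)$ which eventually lie in a single component --- with no additivity of $\delta$, $r$ or $m$ needed. For (c) your skeleton (common $C_5$-generic projection via Theorem \ref{thgenericproj}, grouped Hironaka formula, Corollary \ref{corequisaturated}) does match the paper and can be made correct, but the phrase ``constancy of the cross terms coming from intersect well'' must be replaced by the actual mechanism: each summand in the formula --- every $\mu(\tilde X^i_t,\sigma(t))$ and, by conservation of intersection numbers as in equation (\ref{eqproof}), the cross term at $\sigma(t)$ --- is upper semicontinuous, so constancy of the sum $\mu(\tilde X_t,\sigma(t))$ forces constancy of each summand separately; the constancy of the cross terms is thus part of the conclusion, equivalent (given triviality of the components) to the hypothesis ``$s(X_t,\sigma(t))$ constant'', not a consequence of intersect well. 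Your reading of the Cohen--Macaulay hypothesis (ensuring the fibres are reduced so that equisaturation and the plane-curve theory apply) is correct.
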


\begin{proof} Since the surfaces intersect precisely along the section of the projection to $\mathbb{C}$ that coincides with the singular locus of $(X,0)$, statements $(a)$ and $(b)$ are clear. For $(c)$, as in Theorem \ref{thgenericproj}, consider a projection $\tilde{\pi}:(\mathbb{C}^n \times \mathbb{C},0) \rightarrow (\mathbb{C}^2 \times \mathbb{C},0)$, defined by $\tilde{\pi}(x,t)=(\pi(x),t)$, where $\pi:\mathbb{C}^n\rightarrow \mathbb{C}^2$ is a $C_5$-generic projection for $(X_0,\sigma(0))$. In particular, $\pi$ is a $C_5$-generic projection for $(X_t,\sigma(t))$ and $(X_t^{i},\sigma(t))$ for all $t$ and $i$. 

Denote by $\tilde{X}:=\pi(X)$, thus $\tilde{p}:\tilde{X}\rightarrow T$ is a one parameter flat deformation of the curve $\tilde{X}_0:=\tilde{p}^{-1}(0)$ with a section $\tilde{\sigma}:T \rightarrow \tilde{X}$ in the sense of \cite{briancon}, that is, a curve of singularities of $\tilde{X}$ passing through $0$ can appear outside the set $\tilde{\sigma}(T) \setminus \lbrace 0 \rbrace$. Denote the fibers of $\tilde{p}$ by $\tilde{X}_t:=\tilde{p}^{-1}(t)$. Note that $(\tilde{X}_t,\sigma(t))$ is a $C_5$-generic projection of $(X_t,\sigma(t))$. By Hironaka's formula (\cite[Prop. 4]{hiro}), we have that:

\begin{center}
$\mu(\tilde{X}_t,\sigma(t))=\mu(\tilde{X}_t^{1},\sigma(t))+\cdots+\mu(\tilde{X}_t^{r},\sigma(t))+2s(X_t,\sigma(t))+r-1$.
\end{center}

\noindent Hence, by Corollary \ref{corequisaturated}, $X$ is equisaturated if and only if $\mu(\tilde{X}_t,\sigma(t))$ is constant, if and only if, $\mu(\tilde{X}_t^{1},\sigma(t)),\cdots,$ $\mu(\tilde{X}_t^{r},\sigma(t))$ and $s(X_t,\sigma(t))$ are constant, if and only if, every $X^{i}$ is equisaturated and $s(X_t,\sigma(t))$ is constant.\end{proof}

We will now present examples and counterexamples that show our results are optimal. For the computations, we have made use of the software Singular \cite{singular}. In the following examples, $p$ is the restriction to $(X,0)$ of the canonical projection of $(\mathbb{C}^4,0)$ to the last factor. We also consider a good representative which we also call $p:X\rightarrow T$, and $\sigma: T\rightarrow X$ is the section defined by $\sigma(t)=(0, \cdots ,0,t)$ and we always have that $X_t \setminus \sigma(t)$ and $X \setminus \sigma(T)$ are smooth for all $t\in T$.






First, if $\mu(X_t,\sigma(t))$ is not constant, then $X$ is not topologically trivial by Theorem \ref{teotoptrivial}(a). Thus, the number of branches can be not constant. So, it is natural to expect that the number of tangents is not constant even if $m(X_t,\sigma(t))$ and $s(X_t,\sigma(t))$ are constant.

\begin{example}\label{exemploprinc}\textit{The hypothesis about the constancy of $m(X_t,\sigma(t))$ can not be omitted in Theorem \ref{teoprincipal}}.\\

Consider the germ of surface $(X,0)=(X^1\cup X^2,0)\subset (\mathbb{C}^3 \times \mathbb{C},0)$ where $(X^1,0)$ and $(X^2,0)$ are germs of surfaces parametrized by the maps $n_1:(u,t)\mapsto (tu,u^2,u^3,t)$ and $n_2:(u,t)\mapsto (u,u^4,u^6,t)$, respectively. The surfaces $(X^1,0)$ and $(X^2,0)$ are defined respectively by the following ideals in $\mathcal{O}_4$:

\begin{center}
$\mathcal{I}^1= \langle x^2-t^2y,xy-tz,xz-ty^2,z^2-y^3 \rangle $ $ \ \ \ \ \ $ and $ \ \ \ \ \ $ $\mathcal{I}^2= \langle y-x^4,z-x^6 \rangle $.
\end{center}

\noindent $X^1$ and $X^2$ intersect well. By Example \ref{exemplo2.3}, we have that $X^1$ is topologically trivial. Note that  $X^2$ is the product $X_0^2 \times \C$. Hence, by Proposition \ref{prop1}, $X$ is a topologically trivial family of generically reduced curves, so $\mu(X_t,\sigma(t))$ is constant. We have that $s(X_t,\sigma(t))=2$ for all $t$, on the other hand $m(X_0,0)=3$ and $m(X_t,0)=2$ for $t\neq 0$. The fiber $(X_0,0)$ has two tangents and $(X_t,0)$ has only one tangent for $t\neq 0$.
\end{example}

\begin{example}\label{exemploprinc2} \textit{The hypothesis about the constancy of $s(X_t,\sigma(t))$ can not be omitted in Theorem \ref{teoprincipal}}. \textit{That is, Whitney equisingularity is not a sufficient condition for Question $1$}.\\

Consider the germ of surface $(X,0)=(X^1\cup X^2,0)\subset (\mathbb{C}^3 \times \mathbb{C},0)$ where $(X^1,0)$ and $(X^2,0)$ are respectively parametrized by the maps $n_1:(u,t)\mapsto (u^3,u^4,u^5,t)$ and $n_2:(u,t)\mapsto (u^3,u^4+tu^3,u^7,t)$ and defined by the following ideals in $\mathcal{O}_4$:

\begin{center}
$\mathcal{I}^1= \langle y^2-xz,yz-x^3,z^2-x^2y \rangle $ $  \ \ \ $ and $  \ \ \ $ $\mathcal{I}^2= \langle z-xy+tx^2,y^3-x^4-3txy^2+3t^2x^2y-t^3x^3 \rangle $.
\end{center}

\noindent As in previous example, note that $(X^1,0)$ and $(X^2,0)$ intersect well. We have also that $X^1$ and $X^2$ are topologically trivial families of reduced curves. Thus by Proposition \ref{prop1} we have that $X$ is a topologically trivial family of generically reduced curves, so $\mu(X_t,\sigma(t))$ is constant. We have that $m(X_t,\sigma(t))=6$ for all $t$, $s(X_0,0)=13$ and $s(X_t,\sigma(t))=9$ for $t\neq 0$. The fiber $X_t$ has two tangents at $\sigma(t)$ for $t\neq 0$ and $X_0$ has just one at $0$.
\end{example}


\begin{example}\label{exemploprinc4} \textit{The constancy of $\mu(X_t,\sigma(t))$, $m(X_t,\sigma(t))$ and $s(X_t,\sigma(t))$ in theorem \ref{teoprincipal} does not imply that $(X,0)$ is equisaturated (or equivalently, bi-Lipschitz equisingular)}.\\

Consider the germ of surface $(X,0)=(X^1\cup X^2,0)\subset (\mathbb{C}^3 \times \mathbb{C},0)$ where $(X^1,0)$ and $(X^2,0)$ are respectively parametrized by the maps $n_1:(u,t)\mapsto (u^4,u^6+tu^7,u^9,t)$ and $n_2:(u,t)\mapsto (0,u,0,t)$ and defined by the following ideals in $\mathcal{O}_4$:

\begin{center}
$\mathcal{I}^1= \langle y^2-x^3-2txz-t^2x^2y+t^3yz-t^4x^4, \ z^2-x^3y+txyz-t^2x^5 \rangle $ $ \ $ and $ \ $ $\mathcal{I}^2= \langle x,z \rangle $.
\end{center}

\noindent As in the previous examples, note that $(X^1,0)$ and $(X^2,0)$ intersect well. We have that $X^1$ and $X^2$ are topologically trivial families of reduced curves. Thus by Proposition \ref{prop1}, $X$ is a topologically trivial family of generically reduced curves, so $\mu(X_t,\sigma(t))$ is constant. We have also that $m(X_t,\sigma(t))=5$ and $s(X_t,\sigma(t))=4$ for all $t$, thus by Theorem \ref{teoprincipal} we have that the number of tangents of $(X_t,\sigma(t))$ is constant. On the other hand, note that the plane curve parametrized by the maps $\varphi_t^1(u)=(u^4,u^6+tu^7+u^9)$ and $\varphi_t^2(u)=(0,u)$ is a $C_5$-generic projection of the fibre $X_t$, which we will denote by $\tilde{X}_t$. Note that the topological type of $\tilde{X}_0$ and $\tilde{X}_t$ are distinct for all $t\neq 0$, hence $X$ is not equisaturated (equivalently, bi-Lipschitz equisingular) by \cite[Th. IV.$8$]{briancon}.
\end{example}

\begin{example}\label{exemploprinc3} \textit{The hypothesis about the constancy of the $s$-invariant in the converse of Prop. \ref{prop1} (c) is necessary.}\\

 Consider the same surfaces of the Example \ref{exemploprinc2}. We have that $X^1$ and $X^2$ are actually equisaturated families of reduced curves. We have also that $s(X_t,\sigma(t))$ is not constant, and $(X,0)$ is not equisaturated.

\end{example}

\section{Families of isolated surfaces singularities}\label{sec4}

$ \ \ \  $ In this section we will take a quick look at the case of Whitney equisingular families of isolated surface singularities.\\ 

As in the case of families of curves, we consider a flat map $p:(X,0) \rightarrow (\mathbb{C},0)$, whose fibers $X_t:=p^{-1}(t)$, are isolated surface singularities. We will say that such  a family  is Whitney equisingular, if for  sufficiently small representatives $X$ of $(X,0)$ and $T$ of $(\mathbb{C},0)$, there exists a section $\sigma: T \rightarrow X$ with smooth image, such that $X_t \setminus \{\sigma(t)\}$ is non singular and $X \setminus \sigma(T)$ satisfies Whitney's condition $(b)$ along $\sigma(T)$.
 
We know by Thom-Mather's first isotopy theorem (\cite{Mat70} and \cite{Tho69}) that, under Whitney equisingularity, $X$ is locally topologically trivial along $\sigma(T)$. So, in the spirit of this paper, we can wonder if Whitney equisingularity of a flat family of isolated surface singularities is sufficient to ensure that the tangent cones of the fibers are homeomorphic. The following example shows that the answer is negative.
 
\begin{example}\label{exewhi}
Consider the germ of 3 dimensional variety $(X,0) \subset (\C^5,0)$  given as union of two smooth germs $(X^{1},0)$ and $(X^{2},0)$ defined respectively by the ideals $\mathcal{I}^{1}=\left<z-x^2,y\right>$ and $\mathcal{I}^{2}=\left<y-tx+w^2,z\right>$ in $\C\{x,y,z,w,t\}$.

Note that the singular locus of $(X,0)$ is the intersection of the two smooth components, namely the $t$-axis. Since $t$ is not a zero divisor in ${\mathcal O}_{X,0}=\C\{x,y,z,w,t\}/\left(\mathcal{I}^{1} \cap  \mathcal{I}^{2}\right)$, the projection on the $t$-coordinate $p:(X,0) \to (\C,0)$ is flat. 

Consider representatives $X= X^{1}\cup X^{2}$ of $(X,0)$ and $T$ of $(\C,0)$, and define a section $\sigma: T \rightarrow X$ by $\sigma(t) = (0,0,0,0,t)$. Each component $X^{i}\setminus \sigma(T)$ is Whitney regular along $\sigma(T)$; and since $\sigma(T)$ is precisely the intersection of both components, the family $p: X \rightarrow T$ is Whitney equisingular along $\sigma(T)$ (we use a similar argument as in Proposisiton \ref{prop1} where we assumed that the families of curves intersect well).  
           
For a general member of the family, the tangent cone $C(X_t,\sigma(t))$ consists of two different planes and the tangent cone of the special fiber $C(X_0,\sigma(0))$ is only one plane, so they are not homeomorphic.
\end{example}

       When $(X,0)\subset (\C^n,0)$ is a curve, the lines of the tangent cone at $0$ are precisely the lines obtained as limits at $0$ of tangent lines to $X$. In the general case (dimension bigger than 1) the relation is more complcated. 
       
       We say that a hyperplane $H$ is tangent to $X\subset \C^n$, at a smooth point $x$ if $H$ contains the tangent space to $X$ at $x$. A limit at a point $x_0$, of tangent hyperplanes to $X$ is a hyperplane of $\C^n$ that contains a limit of tangent planes to $X$. 
       
       H. Hironaka proved that the hyperplanes corresponding to the points of the dual variety ${\mathbb{P}\check{C}(X,0)} \subset \check{\mathbb{P}}^{n-1}$ of the projectivised tangent cone are all limits of tangent hyperplanes, (see  \cite[Theorem 1.5]{HL75}). In other words, every tangent hyperlane to the tangent cone is a limit of tangent hyperplanes.   
       
\begin{theorem}(L\^e, Teissier; \cite{LT88}, Cor 2.1.3) If $(X,0) \subset (\C^n,0)$ is an equidimensional analytic germ, there exists a finite family of subcones $\{V_\alpha\}$ of the tangent cone $C(X,0)$, which includes the irreducible components of $C(X,0)$, such that the set of limits of tangent hyperplanes correspond to the union of the projective dual varieties of  the $\mathbb{P} V_\alpha$'s.
\end{theorem}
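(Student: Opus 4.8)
The plan is to realize the set of limits of tangent hyperplanes as the fiber over $0$ of a conormal space, and then to decompose that fiber by specializing $X$ to its tangent cone. First I would introduce the conormal space $C(X) \subset X \times \check{\mathbb{P}}^{n-1}$, defined as the closure of the set of pairs $(x,H)$ where $x$ is a smooth point of $X$ and $H$ is a hyperplane tangent to $X$ at $x$. By construction, the fiber $\kappa^{-1}(0)$ of the projection $\kappa : C(X) \to X$ is precisely the set of limits at $0$ of tangent hyperplanes to $X$, so the theorem becomes a statement about the structure of this single fiber.

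Next I would deform $X$ to its tangent cone. Using the deformation to the normal cone, one produces a flat family $\phi : \mathcal{X} \to \mathbb{C}$ whose generic fiber is isomorphic to $X$ and whose special fiber is $C(X,0)$. I would then form the relative conormal space $C_\phi(\mathcal{X})$ of this family (the fiberwise conormal, closed up over the base) and examine its fiber over $v = 0 \in \mathbb{C}$. The idea is that every limit of tangent hyperplanes to $X$ survives as a limit inside $C_\phi(\mathcal{X})$, so that $\kappa^{-1}(0)$ is contained in, and — after the dimension control discussed below — coincides with, the part of the special fiber lying over $0 \in C(X,0)$.

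The subcones $V_\alpha$ would then be defined as the irreducible components of the image in $\mathbb{P}^{n-1}$ of the special fiber $C_\phi(\mathcal{X})_0$ under its projection to the projectivized tangent cone; these are cones by homogeneity, and the irreducible components of $C(X,0)$ itself appear among them, recovering the inclusion already quoted via \cite[Theorem 1.5]{HL75}. The key structural input is that, for a cone $V_\alpha$, the conormal space is the affine cone over the projective dual variety $(\mathbb{P}V_\alpha)^\vee$, since a hyperplane through $0$ is tangent to the cone $V_\alpha$ at a smooth point exactly when it is tangent to $\mathbb{P}V_\alpha$. Matching the components of $C_\phi(\mathcal{X})_0$ with these duals then yields $\kappa^{-1}(0) = \bigcup_\alpha (\mathbb{P}V_\alpha)^\vee$, which is the assertion.

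The main obstacle is the specialization step itself: one must show that the relative conormal $C_\phi(\mathcal{X})$ is equidimensional over $\mathbb{C}$ (equivalently that no component of its special fiber is extraneous or of the wrong dimension), so that the fiber over $0$ faithfully captures all limits of tangent hyperplanes to $X$ and nothing spurious. This is the delicate heart of Lê and Teissier's analysis — the existence of the \emph{aureole} — and it is exactly what forces the family $\{V_\alpha\}$ to contain, in general, more subcones than just the irreducible components of $C(X,0)$. Establishing it requires the flatness of the relative conormal together with a careful dimension count on its special fiber, and this is where I expect the real work to lie.
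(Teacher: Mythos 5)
You should first note that the paper itself gives no proof of this statement: it is quoted with attribution to Lê--Teissier \cite{LT88}, so the only meaningful comparison is with the original argument there. Measured against that, your outline is in fact a reconstruction of the strategy of \cite{LT88}: conormal space $C(X)\subset X\times\check{\mathbb{P}}^{n-1}$ with $\kappa^{-1}(0)$ the set of limits of tangent hyperplanes, specialization of $X$ to its tangent cone, relative conormal $C_\varphi(\mathcal{X})$ of the family, and the fact that the conormal of a cone $V$ has vertex fiber the cone over $(\mathbb{P}V)^{\vee}$. Your identification of $\kappa^{-1}(0)$ with the fiber of $C_\varphi(\mathcal{X})_0$ over the vertex is correct and can be made precise, but not by ``dimension control'': it is elementary bookkeeping with the scaling. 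Since the isomorphism $\mathcal{X}_v\simeq X$ is a homothety, which fixes hyperplane \emph{directions}, points of $\mathcal{X}_v$ staying in a bounded representative correspond exactly to points of $X$ tending to $0$; this gives the inclusion of the vertex fiber in $\kappa^{-1}(0)$, and the reverse inclusion follows by choosing, for a sequence $x_i\to 0$ realizing a limit $H$, parameters $v_i\to 0$ with $x_i/v_i\to 0$.

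The genuine gap is that you never state, let alone prove, the key lemma on which everything rests: that the special fiber $C_\varphi(\mathcal{X})_0$ is itself a \emph{union of conormal spaces} $C(V_\alpha)$ of subvarieties $V_\alpha\subset C(X,0)$ (which are cones by the $\mathbb{C}^*$-invariance of the special fiber). You define the $V_\alpha$ as images of the components of $C_\varphi(\mathcal{X})_0$ and then assert that ``matching the components with these duals'' yields $\kappa^{-1}(0)=\bigcup_\alpha(\mathbb{P}V_\alpha)^{\vee}$; but flatness over the base and purity of dimension $n-1$ of the special fiber --- which is what your ``careful dimension count'' delivers --- do \emph{not} force a component $W$ of $C_\varphi(\mathcal{X})_0$ to equal the conormal of its image $p(W)$, and if $W\neq C(p(W))$ its vertex fiber has nothing to do with $(\mathbb{P}\,p(W))^{\vee}$. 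What is actually needed is the Lagrangian specialization principle: a limit of (conic) Lagrangian subvarieties of $\mathbb{C}^n\times\check{\mathbb{P}}^{n-1}$ is again conic Lagrangian, hence a finite union of conormals. This integrability statement, proved via the contact/symplectic structure and not by dimension counting, is precisely the heart of Lê--Teissier's analysis (their Théorème 2.1.1 and the surrounding material in \cite{LT88}), so you have mislocated the difficulty: it lies in the conormality of the components of the special fiber, not in the flatness of $C_\varphi(\mathcal{X})$, which is automatic for the closure of a family over a smooth curve. With that lemma inserted, your outline does close up into the intended proof, with the inclusion of the components of $C(X,0)$ among the $V_\alpha$ handled, as you say, by \cite{HL75}.
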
 
          
     The family $\{V_\alpha\}$ is called the aur\'eole of  $(X,0)$, and the $V_\alpha$'s that are not irreducible components of the tangent cone are called  \emph{exceptional cones}. 

	In the particular case of surfaces, the aur\'eole consists of the irreducible components of the projectivised tangent cone plus a finite number of marked points inside them; these points correspond to affine lines called the exceptional tangents. So, in the case of surfaces, the limits of tangent hyperplanes are all tangent hyperplanes to the tangent cone plus all hyperplanes containing an exceptional tangent.
                    
\begin{example} Consider the normal surface singularity $(S,0) \subset (\C^3,0)$ defined by 
      \[x^2 + y^2 +z^3=0\]
      The tangent cone is defined by $x^2+y^2=0$, it is a union of two planes intersecting along the line $(0,0,z)$. It has a reduced structure, so by (\cite{Flo13}, Cor. 8.14), the surface must have an exceptional tangent. 
      
Using SINGULAR we can calculate the Nash fiber over the origin, and see that the set of limits of  tangent planes to $S$ at $0$ is the set of all the planes of $\C^3$ containing the $z$-axis. So this axis is the unique exceptional tangent to $S$ at $0$.
\end{example}
     
   So, again in the spirit of this work, it is natural to ask if the exceptional tangents are preserved in a Whitney regular family of isolated surface singularities.  
  It seems that it is known to specialists that the existence of exceptional tangents is preserved in this context. However we believe it is important to state it properly:

\begin{proposition}\label{propexceptionaltang}	
         Let  $\varphi: (X,0) \to (\C,0)$ be a one-parameter flat family of isolated surface singularities. If it is Whitney equisingular then the special fiber $X_0$ has exceptional tangents at $0$ if and only if the general fiber $X_t$ has  exceptional tangents at $\sigma(t)$ for every $t$ arbitrarily close to the origin.
     \end{proposition}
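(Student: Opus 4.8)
The plan is to reduce the statement about exceptional tangents to the continuity properties of the aur\'eole established by L\^e and Teissier, using Whitney equisingularity to control how the tangent cone and its exceptional cones vary in the family. The key structural input is the cited result (\cite[Prop. 2.2.4.2 and Thm. 2.3.2]{LT88}): in a Whitney equisingular family, the aur\'eole behaves well, meaning the family of subcones $\{V_\alpha\}$ can be organized into a family over the base $T$ whose fibers are exactly the aur\'eoles of the $X_t$. Concretely, I would first recall that Whitney's condition $(b)$ along $\sigma(T)$ is precisely what guarantees that the relative conormal space (equivalently, the Nash modification) of $X$ over $T$ is equisingular, so that the number of irreducible components of the aur\'eole and their incidences are preserved as $t$ varies. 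The exceptional cones are those $V_\alpha$ that are \emph{not} irreducible components of the tangent cone, and in the surface case they are the marked points giving the exceptional tangents.

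The main step is then a counting/continuity argument. By the L\^e--Teissier equisingularity theorem for the aur\'eole, the total number of subcones $V_\alpha$ in the aur\'eole is constant along $\sigma(T)$ under Whitney equisingularity. First I would argue that the number of irreducible components of the tangent cone $C(X_t,\sigma(t))$ is itself constant in the family: this follows because Whitney equisingularity forces the multiplicity $m(X_t,\sigma(t))$ to be constant (Hironaka), and in the Whitney-regular setting the reduced tangent cone varies in an equisingular way so that its number of irreducible components does not jump. Granting that the total count of aur\'eole members is constant and the count of those members that are genuine components of the tangent cone is constant, the difference --- which is exactly the number of exceptional cones, i.e.\ exceptional tangents --- would be forced to be constant as well, and in particular its vanishing or non-vanishing is preserved. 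This gives the biconditional: $X_0$ has exceptional tangents at $0$ if and only if $X_t$ has exceptional tangents at $\sigma(t)$ for all small $t$.

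The hard part will be justifying the claim that the decomposition of the aur\'eole into ``true'' components of the tangent cone versus exceptional cones is itself continuous in the family, rather than just the total number of members. A priori, under a deformation a subcone could pass from being an exceptional cone (a marked point) to becoming an honest component of the tangent cone, or conversely, without changing the total count --- which would make the number of exceptional tangents vary even though their mere existence is what we want to track. I would address this by invoking the fact that, in a Whitney equisingular family, the relative tangent cone $C(X/T)$ is itself equisingular, so its fiberwise irreducible components form a well-defined family over $T$ with constant fiber count; the exceptional cones are then characterized intrinsically (via the relative conormal and the behavior of limits of tangent hyperplanes, as in \cite{HL75}) in a way that is compatible with specialization. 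It is worth stressing that this argument proves only the \emph{stability of existence}, not the constancy of the \emph{number} of exceptional tangents; indeed Example \ref{exefinal} shows the number can jump, so the proof must be careful to extract exactly the weaker invariant --- the nonemptiness of the set of exceptional cones --- that is genuinely preserved.

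Finally, I would remark that this dichotomy is exactly what the structure of the argument isolates: Whitney equisingularity, via the L\^e--Teissier aur\'eole theorem, transports the \emph{aur\'eole as a whole} across the family, and in particular the property ``the aur\'eole contains a member that is not a component of the tangent cone'' is preserved; but the finer data of how many such exceptional members occur inside a given component is not controlled by Whitney regularity alone. This is consistent with the negative answer to Question 3 and explains why the Proposition is stated as a biconditional about existence rather than an equality of numbers.
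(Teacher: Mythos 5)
Your proposal correctly identifies the relevant inputs (\cite[Prop. 2.2.4.2 and Thm. 2.3.2]{LT88}), but the counting argument at its core is wrong, and it is refuted by two examples in this very paper. First, you claim that under Whitney equisingularity the number of irreducible components of the tangent cone $C(X_t,\sigma(t))$ is constant because ``the reduced tangent cone varies in an equisingular way'': Example \ref{exewhi} exhibits a Whitney equisingular family in which $C(X_0,\sigma(0))$ is a single plane while $C(X_t,\sigma(t))$ is a union of two planes, so this step fails. Second, you claim that the total number of members of the aur\'eole of the fiber is constant along $\sigma(T)$; this is not what \cite[Prop. 2.2.4.2]{LT88} asserts, and it is false: in Example \ref{exefinal} the aur\'eole of $X_0$ has $1+6$ members while that of $X_t$ has $1+12$. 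Note also that your argument is internally inconsistent: if both counting claims held, their difference --- the number of exceptional tangents --- would be constant, contradicting Example \ref{exefinal}, which you yourself invoke; acknowledging at the end that only ``existence'' survives does not repair a derivation whose premises, if true, would force constancy of the number.

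The missing idea is to work with relative objects over the total space rather than with fiberwise counts. The paper forms the normal/conormal diagram of $X$ relative to $\Sigma = \sigma(T)$, obtaining finitely many subvarieties $V_\alpha \subset e_\Sigma^{-1}(\Sigma)$ together with maps $e_\alpha : V_\alpha \rightarrow \Sigma$; Whitney's condition $(b)$ gives, via \cite[Prop. 2.2.4.2]{LT88}, that each $e_\alpha$ is surjective with fibers of constant dimension, and Whitney's condition $(a)$ guarantees that the hyperplane $H_t = \{z_n = t\}$ is not a limit of tangent hyperplanes to $X$ at $\sigma(t)$, so that \cite[Thm. 2.3.2]{LT88} identifies the aur\'eole of $X_t$ at $\sigma(t)$ with the collection $\{V_\alpha \cap H_t\}$. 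The existence of an exceptional tangent of $X_t$ is then the existence of some $\alpha$ whose fiber over $\sigma(t)$ is a projective point, and since for each fixed $\alpha$ the fiber dimension is constant and the fibers are nonempty, this property holds for all $t$ simultaneously. This formulation also explains precisely why the number can jump while existence cannot: a single $V_\alpha$ with zero-dimensional fibers is a finite surjective family over $\Sigma$ whose fiber cardinality may drop at $t=0$ (in Example \ref{exefinal}, twelve points collapsing to six), so no fiberwise count of aur\'eole members --- the quantity your argument tracks --- is preserved; only the (non)emptiness of the zero-dimensional part is.
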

     
     \begin{proof}
     
Let us call $\Sigma$ the image of a section of $\varphi$;  recall that it contains the singular locus of $X$. We will consider the normal/conormal diagram of $X$ relative to $\Sigma$ as in 
\cite{LT88} :
             
             \[\xymatrix{E_\Sigma C(X)\ar[r]^{\hat{e}_\Sigma}\ar[dd]^{\kappa'_X}\ar[ddr]^\zeta & C(X)\ar[dd]^{\kappa_X} \\
             &  \\
             E_\Sigma X\ar[r]_{e_\Sigma}  & X }\]

Where \begin{itemize}
 \item $X \subset \C^N$
 \item $E_\Sigma X \subset X \times P^{N-2}$ and $e_{\Sigma}$ is the blow-up of $X$ along $\Sigma$.
 \item $C(X)\subset X \times \check{P}^{N-1}$ is the conormal space of $X$ and $\kappa_X$ is the conormal morphism.
 \item $E_\Sigma C(X) \subset X \times P^{N-2} \times \check{P}^{N-1}$,  $\hat{e}_{\Sigma}$ is  the blow-up of the conormal cspace $C(X)$ along the pull back of $\Sigma$, and
  $\kappa^{'}_X$ the pull-back of $\kappa$. Call $\zeta= \kappa \circ e'_{\Sigma}$.
\end{itemize}

Let us call $D$ the exceptional fiber of $\xi$, it has an irreducible decomposition $D= { \displaystyle \bigcup_{\alpha}D_{\alpha}}$. Let us call  $V_{\alpha}:= \kappa'(D_{\alpha})$. 
Note that $e_{\Sigma}^{-1}(\Sigma) = \displaystyle \bigcup_{\alpha}V_{\alpha}$, but some of the $V_{\alpha}$'s are stricly contained in some irreducuble component of $e_{\Sigma}^{-1}(\Sigma)$, still for each $\alpha$ we have a map $e_{\alpha}: V_{\alpha} \rightarrow \Sigma$. 

Since $(X, \Sigma)$ satisfies Whitney's condition $(b)$, we have that all the maps $e_{\alpha}: V_{\alpha} \rightarrow \Sigma$ are surjective with fibers having the same dimension; see \cite[Prop. 2.2.4.2]{LT88}. Moreover, if $H$ is a hyperplane in the ambient space, which is not a limit a tangent hyperplanes to $X$ at a point $\{s\}= H\cap \Sigma$, then the collection $V_{\alpha} \cap H$ describes the limits of tangent hyperplanes to the section $X\cap H$ at the point $s$; see \cite[Thm. 2.3.2]{LT88}.

So let us consider hyperplanes $H_t$ of the form $z_n=t$. In that way the section $H_t\cap X$ is precisely the fiber $X_t$. Note that since $(X, \Sigma)$ satisfies Whytney's condition $(a)$, and the tangent space to $\Sigma$ at any point is the $z_n$-axis, the hyperlane $H_t$ is not tangent to $X$ at $s_t: = (0, \ldots , 0,t)$. So the limits of tangent hyperlanes to $X_t$ at $s_t$ are determined by the sections $V_{\alpha }\cap H_t$. Hence $X_t$ has an exceptional tangent at $s_t$ if and only if there exists an $\alpha$ such that $H_t\cap V_{\alpha}$ is a projective point corresponding to the exceptional tangent. We have seen that by Whitnay regularity, for every $\alpha$, the fibers of $V_{\alpha} \rightarrow Y$ are simultaneously empty or not. 
So the existence of an exceptional tangent for $X_t$ at $\sigma (t)$ does not depend on $t$.     
     \end{proof}

We can still ask ourselves if in this case the number of exceptional tangents is constant. The answer is again negative and the example we present   here is one of the classical examples by Briançon and Speder  studied in their paper \cite{BS75a}. We thank Anne Pichon for pointing it out to us.        

 \begin{example}\label{exefinal}        
          Let $(X,0)\subset (\C^4,0)$ be the family of isolated surface singularities, parametrized by the t-axis, given by the equation:
        \[ z^3 + tx^4z +x^6 + y^6=0 \]
        
          It is a Whitney regular family as it has been proved
by Briançon and Speder in the aforementioned article \cite{BS75a} by calculating the Milnor numbers of generic plane sections of the family and appealing to Teissier's  $\mu^*(X)$  criterion.

Note that for every member $\left(X_t,\sigma(t)\right) \subset (\C^3,0)$ of the family, its tangent cone $C(X_t,\sigma(t))$ is the $xy$-plane, and its projective 
           dual is the point $[0:0:1] \in \check{\mathbb{P}}^2$. Now, for the special fiber $\left(X_0,0\right)$ defined by the equation $z^3 + x^6+y^6=0$, if we set
           homogeneous coordinates $[a:b:c]$ in $\check{\mathbb{P}}^2$ the Nash fiber over the origin is given by the equation:
           \[  a^6 + b^6=\prod_{k=0}^5 \left(a - \exp\left(i \frac{\pi + 2k\pi}{6}\right)b\right)=0\]
           So either $a=b=0$ and we get the point $[0:0:1]$ (the projective dual of the tangent cone), or $a\neq 0$ and by setting $b=1$ we get  the points
            \[  \left[ \exp\left(i \frac{\pi + 2k\pi}{6}\right):1:c\right] \rightsquigarrow \exp\left(i \frac{\pi + 2k\pi}{6}\right)x + y + cz=0 \]   
           corresponding to 6 pencils consisting of all planes of $\C^3$ that contain the line
           \[ \exp\left(i \frac{\pi + 2k\pi}{6}\right)x + y = 0; \hspace{0.5in} z=0\]
           This means the special fiber has 6 exceptional tangents.
           
            However, for $t \neq 0$ in an arbirtrarily small neighborhood of the origin we have that the Nash fiber of $X_t$ over the singular point is
            given by the equation:
            \[ a^{12} + 2a^6b^6+ \left( 1 + \frac{4}{27}t^3\right)b^{12}=\prod_{k=1}^{12}(a-\alpha_kb)=0\] 
           and reasoning in the same way as above, we get that the general fiber $(X_t,\sigma(t))$ has 12 exceptional tangents.   

\end{example}

\begin{flushleft}
\textbf{Funding}
\end{flushleft}

This work was supported by \textit{Consejo Nacional de Ciencia y Tecnología (Conacyt)} [221635 to A.G.F., 282937 to O.N.S. and J.S.]; \textit{Fondo Institucional de Fomento Regional para el Desarrollo Científico, Tecnológico y de Innovación (FORDECYT)} [265667 to O.N.S.]; and \textit{Programa de Apoyo a Proyectos de Investigación e Innovación Tecnológica (of the National Autonomous University of Mexico) (PAPIIT)} [113817 to O.N.S and J.S].

\begin{flushleft}
\textbf{Acknowledgement}
\end{flushleft}

The authors would like to thank Professor Bernard Teissier for many helpful conversations on this work and Anne Pichon for the communication of Example \ref{exefinal}.

\small

\end{document}